\newtheorem{mtheorem}{Theorem}
\newtheorem{mcorollary}[mtheorem]{Corollary}
\newtheorem{theorem}{Theorem}[section]
\newtheorem{lemma}[theorem]{Lemma}
\newtheorem{cor}[theorem]{Corollary}
\theoremstyle{definition}
\newtheorem{remark}[theorem]{Remark}
\numberwithin{equation}{section}
\numberwithin{figure}{section}
\numberwithin{table}{section}
\begin{document}
\title[Moduli of 3-dimensional diffeomorphisms]
{Moduli of 3-dimensional diffeomorphisms with saddle-foci}

\author[Shinobu Hashimoto]{Shinobu Hashimoto} 
\address{Department of Mathematics and Information Sciences,
Tokyo Metropolitan University,
Minami-Ohsawa 1-1, Hachioji, Tokyo 192-0397, Japan.\\
{\tt E-mail address: hashimoto-shinobu@ed.tmu.ac.jp}}

\author
[Shin Kiriki]{Shin Kiriki}
\address{
Department of Mathematics, Tokai University, 4-1-1 Kitakaname, 
Hiratuka Kanagawa, 259-1292, Japan.\\
{\tt E-mail address: kiriki@tokai-u.jp}}

\author
[Teruhiko Soma]{Teruhiko Soma}
\address{Department of Mathematical Sciences,
Tokyo Metropolitan University,
Minami-Ohsawa 1-1, Hachioji, Tokyo 192-0397, Japan.\\
{\tt E-mail address:  tsoma@tmu.ac.jp}}

\subjclass[2010]{Primary: 37C05, 37C15, 37C29}
\keywords{diffeomorphism, moduli, homoclinic tangency}
\thanks{This work was partially supported by JSPS KAKENHI Grant Numbers 17K05283 and 26400093.}

\date{\today}

\begin{abstract}
We consider a space $\mathcal{U}$ of 3-dimensional diffeomorphisms $f$ 
with hyperbolic fixed points $p$ the stable and unstable manifolds of which have quadratic  
tangencies and satisfying some open conditions and such that $Df(p)$ has non-real expanding eigenvalues 
and a real contracting eigenvalue.
The aim of this paper is to study moduli of diffeomorphisms in $\mathcal{U}$.
We show that, for a generic element $f$ of $\mathcal{U}$, 
all the eigenvalues of $Df(p)$ are moduli and the restriction of a conjugacy homeomorphism  
to a local unstable manifold is a 
uniquely determined linear conformal map.
\end{abstract}

\maketitle
The topological classification of structurally unstable diffeomorphisms or vector fields on a manifold $M$ 
is an important subject in the study of dynamical systems.
Palis \cite{pa} suggested that moduli play important roles in such a classification.
For a subspace $\mathcal{N}$ of the diffeomorphism space $\mathrm{Diff}^r(M)$ with $r\geq 1$, 
we say that a value $m(f)$ determined by $f\in \mathcal{N}$ is a \emph{modulus} in $\mathcal{N}$ 
if $m(g)=m(f)$ holds for any $g\in \mathcal{N}$ topologically conjugate to $f$, that is, 
there exists a homeomorphism $h:M\to M$ with $g=h\circ f\circ h^{-1}$.
A modulus for a certain class of vector fields is defined similarly.
We say that a set $\mu_{\mathcal{N}}$ of moduli is \emph{complete} if any $f$, $g\in \mathcal{N}$ 
with $m(f)=m(g)$ for all $m\in \mu_{\mathcal{N}}$ are topologically conjugate. 
For given vector fields $X$, $Y$ on $M$, a candidate for a conjugacy homeomorphism between 
$X$ and $Y$ is found in a usual manner.
In many cases, such a map is well defined in a most part of $M$.
So it remains to show that the map is extended to a homeomorphism on $M$ by using the condition that 
$X$ and $Y$ have the same value for any moduli in $\mu_{\mathcal{N}}$.
On the other hand, in the diffeomorphism case, it would be difficult to find a complete set of 
moduli except for very restricted classes $\mathcal{N}$ in $\mathrm{Diff}^r(M)$.

First we consider the case that $\dim M=2$ and $f_j$ $(j=0,1)$ are elements of $\mathrm{Diff}^r(M)$ $(r\geq 2)$ with two saddle fixed points $p_j$, $q_j$.
Suppose moreover that $W^u(p_j)$ and $W^s(q_j)$ have a quadratic heteroclinic tangency $r_j$ 
and there exists a conjugacy homeomorphism $h$ between $f_1$ and $f_2$ with   
$h(p_0)=p_1$, $h(q_0)=q_1$ and $h(r_0)=r_1$.
Then,  Palis \cite{pa} proved that
$\dfrac{\log |\lambda_0|}{\log |\mu_0|}= \dfrac{\log |\lambda_1|}{\log |\mu_1|}$ holds 
under ordinary conditions, 
where $\lambda_j$ is the contracting eigenvalue of $Df(p_j)$ and 
$\mu_i$ is the expanding eigenvalue of $Df(q_j)$.
In \cite{po}, Posthumus proved that the homoclinic version of Palis' results.
In fact, he proved that, if $f_j$ $(j=0,1)$ has a saddle fixed point $p_j$ with a homoclinic quadratic tangency, 
then $\dfrac{\log |\lambda_0|}{\log |\mu_0|}= \dfrac{\log |\lambda_1|}{\log |\mu_1|}$ holds, 
where $\lambda_j,\mu_j$ are the contracting and expanding eigenvalues of $Df(p_i)$.
Moreover,  he showed that, by using some results of de Melo \cite{dm},  
$\lambda_0=\lambda_1$ and $\mu_0=\mu_1$ hold if $\dfrac{\log |\lambda_0|}{\log |\mu_0|}$ is irrational.
We refer to \cite{dmp, dmvs,pt,mp1,gpvs,ha} and references therein for more results on    
moduli of 2-dimensional diffeomorphisms.
Moduli for 2-dimensional flows with saddle-connections are studied by Palis \cite{pa} and Takens 
\cite{ta} and so on.
In those papers, they present finite sets of moduli which are complete in a 
neighborhood of the saddle connection in $M$.

In this paper, we consider 3-dimensional diffeomorphisms $f$  with a hyperbolic fixed 
point $p$ such that $W^u(p)$ and $W^s(p)$ have a quadratic tangency and $Df(p)$ has non-real 
expanding eigenvalues $re^{\pm\sqrt{-1}\theta}$ with $r>1$ and a contracting eigenvalue $0<\lambda<1$.
Moduli for diffeomorphisms of dimension more than two have been already studied by 
\cite{npt,du2,mp2} and so on.

First we will prove the following theorem.

\begin{mtheorem}\label{thm_A}
Let $M$ be a $3$-manifold and $f_j$ $(j=0,1)$ elements of $\mathrm{Diff}^r(M)$ for some $r\geq 3$ 
which have hyperbolic fixed points $p_j$ and homoclinic quadratic tangencies $q_j$ positively 
associated with $p_j$ 
and satisfy the following conditions.
\begin{itemize}
\setlength{\leftskip}{-18pt}
\item
For $j=0,1$, there exists a neighborhood $U(p_j)$ of $p_j$ in $M$ such that $f_j|_{U(p_j)}$ is linear 
and $Df_j(p_j)$ has non-real eigenvalues $r_je^{\pm \sqrt{-1}\theta_j}$ 
 and a real eigenvalue $\lambda_j$ with $r_j>1$, $\theta_j\neq 0\mod \pi$ and 
$0<\lambda_j<1$. 
\item  
$f_0$ is topologically conjugate to $f_1$ by a homeomorphism $h:M\to M$ with $h(p_0)=p_1$ 
and $h(q_0)=q_1$.
\end{itemize}
Then the following \eqref{A1} and \eqref{A2} hold.
\begin{enumerate}[\rm (1)]
\item\label{A1}
$\dfrac{\log \lambda_0}{\log r_0}=\dfrac{\log \lambda_1}{\log r_1}$.
\item\label{A2} 
Either $\theta_0=\theta_1$ or $\theta_0=-\theta_1\mod 2\pi$.
\end{enumerate}
\end{mtheorem}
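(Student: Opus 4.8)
The plan is to pass to the linearizing coordinates near each $p_j$ and to read off, from the conjugacy $h$, two asymptotic invariants of the way the two-dimensional unstable manifold returns and winds around the one-dimensional stable manifold.

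First I would choose coordinates $(x,y,z)$ on $U(p_j)$ in which
\begin{equation*}
f_j(x,y,z)=\bigl(r_j(x\cos\theta_j-y\sin\theta_j),\,r_j(x\sin\theta_j+y\cos\theta_j),\,\lambda_j z\bigr),
\end{equation*}
so that $W^u_{\mathrm{loc}}(p_j)=\{z=0\}$ carries the rotation--dilation of angle $\theta_j$ and factor $r_j$, while $W^s_{\mathrm{loc}}(p_j)=\{x=y=0\}$ is the contracting $z$-axis. Since $h\circ f_0=f_1\circ h$ with $h(p_0)=p_1$ and $h(q_0)=q_1$, the map $h$ sends $W^u(p_0)\to W^u(p_1)$, $W^s(p_0)\to W^s(p_1)$, and carries the homoclinic orbit to the homoclinic orbit with the index preserved: $h(f_0^{\,n}(q_0))=f_1^{\,n}(q_1)$ for all $n\in\mathbb{Z}$. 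Letting $q_j^{+}$ and $q_j^{-}$ denote the first forward and backward iterates of $q_j$ lying in $U(p_j)$, the forward orbit then marches down the $z$-axis with heights $\sim\lambda_j^{\,n}$, while the backward orbit spirals into the origin of $\{z=0\}$ with radii $\sim r_j^{-n}$ and argument decreasing by $\theta_j$ each step. This reduces Theorem~\ref{thm_A} to a local statement about these two coupled sequences and the returning sheets of $W^u$ that carry them.

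For \eqref{A1} I would push the quadratic tangency forward: the returning part of $W^u(p_j)$ consists of surface sheets $S_n^{(j)}=f_j^{\,n}(S_0^{(j)})$ that are tangent to the $z$-axis at the heights $\sim\lambda_j^{\,n}$ and whose transverse steepness grows like $(r_j\lambda_j^{-2})^{\,n}$ (the ``positively associated'' hypothesis fixes the side of this contact). This is the exact three-dimensional echo of the parabola computation behind the Palis--Posthumus modulus, with $|\mu|$ replaced by the eigenvalue modulus $r_j$. Because $h$ is a homeomorphism fixing $p_j$ and preserving both invariant manifolds and the labels $n$, it matches the height sequences and, simultaneously, the order of contact of $S_n^{(j)}$ with the stable axis dictated by the competing rates $\lambda_j$ and $r_j\lambda_j^{-2}$. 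Equating these orders of contact as $n\to\infty$ forces $\log\lambda_0/\log r_0=\log\lambda_1/\log r_1$.

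The genuinely new ingredient is \eqref{A2}, the angle. Under $f_j$ each sheet $S_n^{(j)}$ is carried to $S_{n+1}^{(j)}$ by a horizontal rotation through exactly $\theta_j$, so $\bigcup_n S_n^{(j)}$ is a surface that \emph{spirals} around the axis $W^s_{\mathrm{loc}}(p_j)$, advancing one tangency level per angular increment $\theta_j$. I would encode this by the winding (a linking number with the axis) of a path in $\bigcup_n S_n^{(j)}$ joining level $n$ to level $n+1$; this is a topological invariant of the pair (returning surface, stable axis) and is preserved by $h$ up to the global orientation that $h$ may reverse, giving $\theta_0=\theta_1$ or $\theta_0=-\theta_1 \pmod{2\pi}$, the sign recording the handedness of the spiral. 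The main obstacle, throughout but most acutely here, is that $h$ is only a homeomorphism: every quantity used must be topological or asymptotic and must survive the nonlinear global transition $q_j\mapsto q_j^{\pm}$ outside $U(p_j)$. Unlike the order-of-contact argument for \eqref{A1}, which has a two-dimensional ancestor, the winding invariant for \eqref{A2} is intrinsically three-dimensional, and proving that the spiralling of the returning sheets about the stable axis is a conjugacy invariant determined modulo sign is where the bulk of the work lies.
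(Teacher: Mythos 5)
There is a genuine gap in both halves of your argument, and in both cases it is the same one: you have not supplied a mechanism by which the purely topological map $h$ transports \emph{metric} information. For part \eqref{A1}, the step ``Because $h$ is a homeomorphism \dots it matches the height sequences and, simultaneously, the order of contact \dots Equating these orders of contact'' does not work: a conjugating homeomorphism preserves neither heights $\lambda_j^n$, nor steepness $(r_j\lambda_j^{-2})^n$, nor order of contact (any parabola can be flattened by a homeomorphism of $\mathbb{R}^3$). The only thing $h$ transports for free is combinatorial data, and your family $S^{(j)}_n$ carries a single index $n$, which is matched to $n$ tautologically and yields no ratio of logarithms. What is needed is a \emph{second}, independently indexed family together with a topologically detectable coincidence between the two scales $\lambda^m$ and $r^{-n}$. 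This is exactly what the paper builds in Sections \ref{S_front_folding}--\ref{S_limit}: a secondary transverse homoclinic point $\widehat z$ near $q$ (whose existence rests on property (P), i.e.\ on $\theta\neq 0 \bmod \pi$) generates disks $D_m$ crossing $W^s_{\mathrm{loc}}(p)$ at heights $\asymp\lambda^m$, hence bent disks $\widetilde H_m$, and one records for which pairs $(m,n)$ the folding curve of $f^n(\widetilde H_m)$ stays at distance $\asymp 1$ from the origin; by \eqref{eqn_dmn} this happens precisely when $\lambda^{m}r^{n}\asymp 1$, so $m_j/n_j\to -\log r/\log\lambda$ along such pairs. Since $h$ preserves the indices and (Lemma \ref{l_HH}, Corollary \ref{c_HH}) sends the limit of these folding curves to the corresponding limit segment for $f'$, which again lies at positive finite distance from the origin, the \emph{same} sequence forces $m_j/n_j\to-\log r'/\log\lambda'$, and \eqref{A1} follows. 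Your proposal contains no analogue of this two-parameter, topologically encoded comparison.

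For part \eqref{A2} your geometric instinct (the fold directions advance by $\theta$ per level, and $h$ must respect this up to sign) is the right one, but the invariant you propose cannot do the job, and the hard step is deferred rather than proved. A linking or winding number is an integer, while $\theta$ ranges over a circle, so no such invariant can pin down $\theta \bmod 2\pi$; worse, inside $U_a(p)$ the sheets $S_n^{(j)}$ are pairwise disjoint, so a path from level $n$ to level $n+1$ must travel through the global part of $W^u(p_j)$ and its winding about the axis is path-dependent, hence not well defined. What does encode $\theta$ is the induced dynamics on \emph{directions}: the limit straight segments of the folding curves form a family on which $f$ acts as the rotation $R_\theta$ of $\mathrm{St}_a(p)\cong S^1$; once one knows $h$ carries these limit segments to the corresponding ones for $f'$, one gets a circle homeomorphism conjugating $R_\theta$ to $R_{\theta'}$ and concludes by invariance of the rotation number (with a separate counting argument in the rational case). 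Finally, the assertion that a mere homeomorphism preserves the fold structure at all --- which you yourself flag as ``where the bulk of the work lies'' --- is precisely the paper's key Lemma \ref{l_HH}: a fold is a differential notion, and the proof requires comparing the ambient metric with the path metrics on the bent disks (Lemma \ref{l_d_H}): away from the front curve the two sheets are ambient-close but path-far, and uniform continuity of $h$ and $h^{-1}$ then forces $h$ to map front curves into small neighborhoods of front curves. Since your proposal neither proves this nor offers a substitute, and assigns to a winding number a task it cannot perform, both \eqref{A1} and \eqref{A2} remain unproven as written.
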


Here we say that a homoclinic quadratic tangency $q_0$ is \emph{positively associated} with $p_0$ 
if both $f_0^n(q_0)$ and $f_0^{-n}(\alpha)$ lie in the same component of $U(p_0)\setminus W_{\mathrm{loc}}^u(p_0)$ 
for a sufficiently large $n\in\mathbb{N}$ and any small curve $\alpha$ in $W^s(p_0)$ 
containing $q_0$.
Theorem \ref{thm_A} holds also in the case when $\theta_0=0\mod \pi$ or $-1<\lambda_j<0$ except 
for some rare case, see Remark \ref{r_exception} for details. 

\begin{remark}
Assertion \eqref{A1} of Theorem \ref{thm_A} is implied in  
the case (D) of Theorem 1.1 in \cite[Chapter III]{npt}.
Assertion \eqref{A2} is also proved by Dufraine \cite{du2} under weaker assumptions.
The author used non-spiral curves in $W_{\mathrm{loc}}^u(p)$ emanating from $p$.
On the other hand, we employ unstable bent disks defined in Section \ref{S_front_folding} which are originally 
introduced by Nishizawa \cite{ni}.
By using such disks, we construct a convergent sequence of mutually parallel straight segments in 
$W_{\mathrm{loc}}^u(p)$ which are mapped to straight segments in $W_{\mathrm{loc}}^u(h(p))$ by $h$, 
see Figure \ref{fig_parallel}.
An \emph{advantage} of our proof is that these sequences are applicable to prove our main theorem, Theorem \ref{thm_B} below.
\end{remark}

Results corresponding to Theorem \ref{thm_A} for 3-dimensional flows 
with Shilnikov cycles are obtained by  
Togawa \cite{to}, Carvalho-Rodrigues \cite{cr} and for those with connections of saddle-foci by Bonatti-Dufraine \cite{bd}, Dufraine \cite{du}, Rodrigues \cite{ro} and so on.
See the Section 2 in \cite{ro} for details.
Moreover Carvalho and Rodrigues \cite{cr} present results on moduli of  
3-dimensional flows with Bykov cycles.

\begin{mtheorem}\label{thm_B}
Under the assumptions in Theorem \ref{thm_A}, suppose moreover that $\theta_0/2\pi$ is irrational.
Then the following conditions hold.
\begin{enumerate}[\rm (1)]
\item\label{B1}
$\lambda_0=\lambda_1$ and $r_0=r_1$.
\item\label{B2}
The restriction $h|_{W_{\mathrm{loc}}^u(p_0)}:W_{\mathrm{loc}}^u(p_0)\to W_{\mathrm{loc}}^u(p_1)$ is 
a uniquely determined linear conformal map.
\end{enumerate}
\end{mtheorem}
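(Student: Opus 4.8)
The plan is to reduce everything to the action that $h$ induces on the two-dimensional unstable manifolds. Since $h$ is a conjugacy with $h(p_0)=p_1$, it carries $W^u(p_0)$ onto $W^u(p_1)$, so after composing with a power of $f_1$ if necessary I obtain a homeomorphism $\phi=h|_{W^u_{\mathrm{loc}}(p_0)}\colon W^u_{\mathrm{loc}}(p_0)\to W^u_{\mathrm{loc}}(p_1)$. Using the linearizing coordinates supplied by the hypothesis that $f_j|_{U(p_j)}$ is linear, I identify each $W^u_{\mathrm{loc}}(p_j)$ with a neighborhood of $0$ in $\mathbb{C}$ on which $f_j$ acts as $z\mapsto \sigma_j z$ with $\sigma_j=r_je^{\sqrt{-1}\theta_j}$, so that $\phi(0)=0$ and, on the relevant domain,
\begin{equation}\label{eq_conj_unst}
\phi(\sigma_0 z)=\sigma_1\,\phi(z).
\end{equation}
From the proof of Theorem \ref{thm_A} I import its main geometric output: a convergent family of mutually parallel straight segments in $W^u_{\mathrm{loc}}(p_0)$ whose images under $\phi$ are again straight segments in $W^u_{\mathrm{loc}}(p_1)$. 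Call such a segment \emph{good}; by \eqref{eq_conj_unst} one has $\phi(\sigma_0^{-n}L)=\sigma_1^{-n}\phi(L)$, so the whole inward spiral orbit $\{\sigma_0^{-n}L\}_{n\geq 0}$ of a good segment $L$ consists of good segments.

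The next step is to show that $\phi$ is linear. Multiplication by $\sigma_0^{-1}$ rotates the direction of a segment by $-\theta_0$ while contracting it into $W^u_{\mathrm{loc}}(p_0)$; since $\theta_0/2\pi$ is irrational, $\{n\theta_0\bmod 2\pi\}$ is dense in the circle, and hence good segments accumulate at $0$ in a dense set of directions. I would then show that a homeomorphism of a neighborhood of $0$ carrying straight segments in a dense set of directions to straight segments must send lines to lines, so that an affine-rigidity (fundamental-theorem-of-affine-geometry) argument applies and $\phi$ agrees with an affine map; because $\phi(0)=0$ this affine map is a real-linear isomorphism $A$.

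Finally I extract the eigenvalue relations. Writing $R_j$ for the real $2\times2$ matrix of multiplication by $\sigma_j$, equation \eqref{eq_conj_unst} becomes $AR_0=R_1A$, so $R_1=AR_0A^{-1}$ is similar to $R_0$ and therefore shares its eigenvalues $r_0e^{\pm\sqrt{-1}\theta_0}$. Comparing with the eigenvalues $r_1e^{\pm\sqrt{-1}\theta_1}$ of $R_1$ gives $r_0=r_1$ and $\theta_1=\pm\theta_0$, reproving Theorem \ref{thm_A}\eqref{A2}. Moreover a real-linear isomorphism conjugating a rotation-scaling of angle $\theta_0$ to one of angle $+\theta_0$ (resp. $-\theta_0$) commutes with $R_0$ (resp. with its conjugate) and is thus $\mathbb{C}$-linear (resp. conjugate-linear); the orientation is fixed by the positive association of the tangencies, which is preserved under $h$, leaving $\phi$ $\mathbb{C}$-linear, i.e. $\phi(z)=cz$ with $\sigma_0=\sigma_1$. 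Combining $r_0=r_1$ with Theorem \ref{thm_A}\eqref{A1} yields $\log\lambda_0=\log\lambda_1$, hence $\lambda_0=\lambda_1$, which proves \eqref{B1}. For the uniqueness in \eqref{B2}, the conjugacy sends the backward orbit of $q_0$ in $W^u_{\mathrm{loc}}(p_0)$ to that of $q_1$; evaluating $\phi(z)=cz$ at such a point determines $c$, so the linear conformal map is unique.

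The main obstacle I anticipate is the linearity step. Theorem \ref{thm_A} furnishes good segments only in a restricted range of directions and positions, so the argument must carefully combine the inward spiral orbit with the irrationality of $\theta_0/2\pi$ to produce genuine line segments in a dense set of directions, verify that their $\phi$-images form a consistent, parallelism- and incidence-preserving affine pattern rather than a merely projective one, and upgrade from a dense set of directions to all lines by continuity before the affine-rigidity theorem can be invoked.
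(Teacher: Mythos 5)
Your high-level strategy coincides with the paper's: use the segments that the proof of Theorem \ref{thm_A} certifies are carried to segments by $h$, together with the irrationality of $\theta_0/2\pi$, to rigidify $\phi=h|_{W^u_{\mathrm{loc}}(p_0)}$; and your concluding steps (reading $r_0=r_1$ and $\theta_0=\theta_1$ off the conjugacy once $\phi$ is linear, deducing $\lambda_0=\lambda_1$ from Theorem \ref{thm_A}\,\eqref{A1}, and pinning the constant down by $h(q_0)=q_1$) are exactly how the paper finishes. The genuine gap is the step you yourself flag as the main obstacle: the proof that $\phi$ is linear. The route you propose cannot be completed with the segments you have. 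Your certified-good family consists of the parallel chords $\{\gamma_k^\natural\}$ and their inward orbits $\sigma_0^{-n}L$; the latter shrink toward the origin at the same rate as their distance to it, so away from $0$ your family places only finitely many, or at best nearly parallel (nearly radial), segments near any given point. Knowing that all diameters map to diameters constrains nothing by itself (every homeomorphism $\rho e^{\sqrt{-1}\alpha}\mapsto g(\rho,\alpha)e^{\sqrt{-1}\eta(\alpha)}$ preserves the family of diameters), and ``segments accumulating at $0$ in a dense set of directions'' is not the hypothesis ``all segments with direction in a dense set go to segments'' that your Hausdorff-limit/affine-rigidity argument would need.

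Nor can this be repaired just by adjoining forward images such as the paper's $\xi_k^\natural=f^k(\gamma_{a_0k}^\natural)\cap D_a(p)$: the chords the construction certifies as good have (direction, distance) pairs of the form $(\theta^\natural+n\theta,\;w_0\lambda^kr^n)$, and whether these are dense in the space of all chords depends on arithmetic relations among $\theta$, $\log r$, $\log\lambda$ that do not follow from irrationality of $\theta/2\pi$. For instance, if $\log r=\theta$ and $\log\lambda=-2\pi$, the logarithm of the distance of every certified chord is congruent mod $2\pi$ to its direction, so these chords form a one-parameter spiral family, nowhere dense in the space of lines; any soft rigidity theorem based on density of good lines is then unavailable, while Theorem \ref{thm_B} must still hold. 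What the paper does instead, and what your sketch is missing, is a quantitative input beyond ``good segments exist'': from \eqref{eqn_theta_gamma} it derives \eqref{eqn_theta_l}, i.e.\ $h$ preserves angle \emph{differences} between diameters. It then normalizes the linearizing coordinate on $D_{a'}(p')$ so that $h$ matches at one point of $\xi_0^\natural$, notes that every point of $\xi_0^\natural$ is cut out by a unique diameter, so angle preservation forces $h$ to be the identity (in the new coordinate) on all of $\xi_0^\natural$, propagates this identity by the conjugacy to the chords $\xi_{nk_*}^\natural$, and concludes because $\overline{\bigcup_n\xi_{nk_*}^\natural}=D_a(p)$. This hands-on argument needs only a countable family of good chords with dense union, which is why it works in all arithmetic cases; your appeal to the fundamental theorem of affine geometry does not, so the linearity step---and with it assertion \eqref{B1}, which you derive from it---remains unproven in your proposal.
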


In contrast to Posthumus' results for 2-dimensional diffeomorphisms, 
the eigenvalues $\lambda_0$ and $r_0$ are proved to be moduli without the assumption that 
$\dfrac{\log \lambda_0}{\log r_0}$ is irrational.

The restriction $h|_{W_{\mathrm{loc}}^u(p_0)}$ is said to be a \emph{linear conformal map} if $h|_{W_{\mathrm{loc}}^u(p_0)}$ 
is represented as 
$h|_{W_{\mathrm{loc}}^u(p_0)}(z)=\rho e^{\sqrt{-1}\omega}z$ $(z\in W_{\mathrm{loc}}^u(p_0))$ for some $\rho\in\mathbb{R}\setminus \{0\}$ and $\omega\in\mathbb{R}$ 
under the natural identification of $W_{\mathrm{loc}}^u(p_0)$, $W_{\mathrm{loc}}^u(p_1)$ with 
neighborhoods of the origin in $\mathbb{C}$ via their linearizing coordinates.

For any $r_j>1$ and $\theta_j\in \mathbb{R}$ $(j=0,1)$, let $\varphi_j:\mathbb{C}\to \mathbb{C}$ be the map 
defined by $\varphi_j(z)=r_je^{\sqrt{-1}\theta_j}z$.
Then there are many choices of conjugacy homeomorphisms on $\mathbb{C}$ for $\varphi_0$ and $\varphi_1$. 
For example, we take two-sided Jordan curves $\Gamma_j$ in $\mathbb{C}$ with $\varphi_j(\Gamma_j)\cap \Gamma_j=\emptyset$ 
and bounding disks in $\mathbb{C}$ containing the origin arbitrarily.
Then there exists a conjugacy homeomorphism $h:\mathbb{C}\to \mathbb{C}$ for $\varphi_0$ and $\varphi_1$ with 
$h(\Gamma_0)=\Gamma_1$.
On the other hand, Theorem \ref{thm_B}\,\eqref{B2} implies that we have severe constraints 
in the choice of conjugacy homeomorphisms for 3-dimensional diffeomorphisms as above.
Intuitively, it says that only a homeomorphism $h$ with $h|_{W^u_{\mathrm{loc}}}(p)$ linear and conformal can be 
a candidate for a conjugacy between $f_0$ and $f_1$.
As an application of the linearity and conformality of $h|_{W^u_{\mathrm{loc}}}(p)$, we will present a new modulus for $f_0$ other than $\theta_0$, $\lambda_0$, $r_0$, see Corollary \ref{c_C} in Section \ref{S_PB}.

\section{Front curves and folding curves}\label{S_front_folding}

For $j=0,1$, let $f_j$ be a diffeomorphism and $q_j$ a quadratic tangency associated with a hyperbolic 
fixed point $p_j$ satisfying the conditions of Theorem \ref{thm_A}. 
We will define in this section front curves in $W^u(p_j)$ and folding curves in $W_{\mathrm{loc}}^u(p_j)$ 
and show in the next section that these curves converge to straight segments which are preserved by any conjugacy 
homeomorphism between $f_0$ and $f_1$.

We set $f_0=f$, $p_0=p$, $q_0=q$, $r_0=r$, $\theta_0=\theta$ and $\lambda_0=\lambda$ for short.
Similarly, let $f_1=f'$, $p_1=p'$, $q_1=q'$, $r_1=r'$, $\theta_1=\theta'$ and $\lambda_1=\lambda'$.
Suppose that $(z,t)=(x,y,t)$ with $z=x+\sqrt{-1}y$ is a coordinate around $p$ with respect to 
which $f$ is linear.
For a small $a>0$, let $D_a(p)$ be the disk $\{z\in\mathbb{C}\,; |z|\leq a\}$.
We may assume that $q$ is contained in the interior of $D_a(p)\times \{0\}\subset W_{\mathrm{loc}}^u(p)$ and $\widehat q=f^N(q)$ is in the interior of the upper half $W_{\mathrm{loc}}^{s+}(p)=\{0\}\times [0,a]$ of $W_{\mathrm{loc}}^s(p)$ 
for some $N\in\mathbb{N}$.
See Figure \ref{fig_1}.
\begin{figure}[hbt]
\centering
\scalebox{0.6}{\includegraphics[clip]{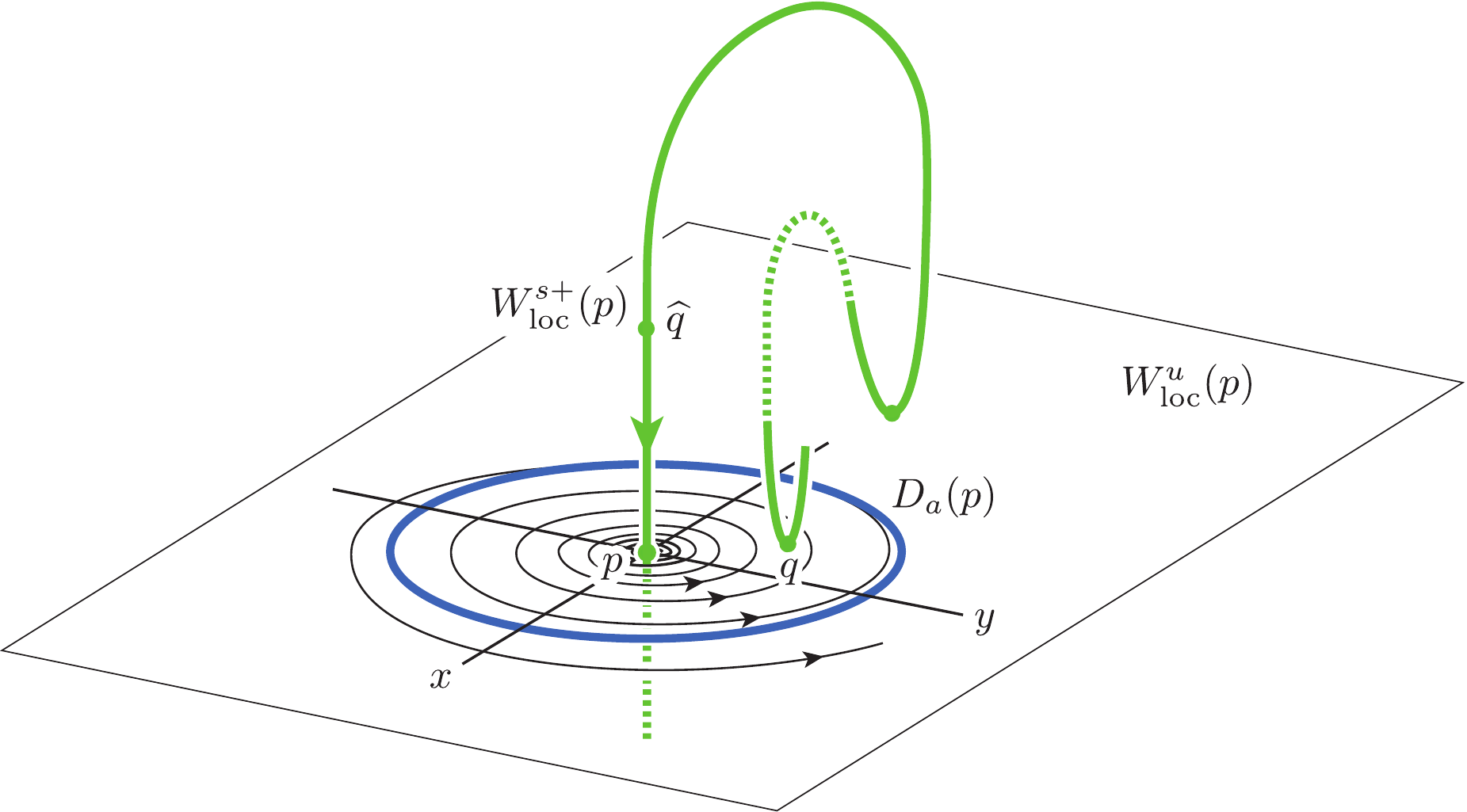}}
\caption{A saddle-focus $p$ and a homoclinic quadratic tangency $q$ in $D_a(p)$.}
\label{fig_1}
\end{figure}
Let $U_a(p)$ be the circular column in the coordinate neighborhood  
defined by $U_a(p)=D_a(p)\times [0,a]$ and $V_{\widehat q}$ a small neighborhood of $\widehat q$ in $U_a(p)$.
Suppose that $U_a(p)$ has the Euclidean metric induced from the linearizing coordinate on $U_a(p)$.
By choosing the coordinate suitably and replacing $\theta$ by $-\theta$ if necessary, 
we may assume that the restriction $f|_{D_a(p)}$ is represented as $re^{\sqrt{-1}\theta}z$ for 
$z\in \mathbb{C}$ with $|z|<a$.
Similarly, one can suppose that $f'|_{D_{a'}(p')}$ is represented as $r'e^{\sqrt{-1}\theta'}z$ for 
some $a'>0$.
The orthogonal projection $\mathrm{pr}:U_a(p)\to D_a(p)$ is defined by $\mathrm{pr}(x,y,t)=(x,y)$.

In this section, we construct an unstable bent disk $\widetilde H_0$ in $W^u(p)\cap U_a(p)$, the front curve $\widetilde\gamma_0$ in $\widetilde H_0$ and the folding curves $\gamma_0$ in $U_a(p)$.
We also define the sequence of unstable bent disks $\widetilde H_m$ in $W^u(p)\cap U_a(p)$ converging to $\widetilde H_0$, 
which will be used in the next section to construct the sequence of front curves converging to $\widetilde\gamma_0$.

\subsection{Construction of unstable bent disks, front curves and folding curves}\label{ss_bent_disk}
We set $\widehat q=(0,t_0)$.
Let $\widetilde H$ be the component of $W^u(p)\cap V_{\widehat q}$ containing $\widehat q$.
One can retake the linearizing coordinate on $\mathbb{C}$ if necessary so that the line in $V_{\widehat q}$ passing through 
$\widehat q$ and parallel to the $x$-axis in $U_a(p)$ meets $\widetilde H$ transversely. 
Then $\widetilde H$ is represented as the graph of a $C^r$-function 
$x=\varphi(y,t)$ with
\begin{equation}\label{eqn_vp}
\varphi(0,t_0)=0,\quad\frac{\partial \varphi}{\partial t}(0,t_0)=0\quad\text{and}\quad\frac{\partial^2 \varphi}{\partial t^2}(0,t_0)\neq 0.
\end{equation}
By the implicit function theorem, there exists a $C^{r-1}$-function $t=\eta(y)$ defined in 
a small neighborhood $V$ of $0$ in the $y$-axis and satisfying $\eta(0)=t_0$ and $\partial \varphi(y,\eta(y))/\partial t=0$.
Then the curve $\widetilde \gamma$ in $V_{\widehat q}$ parametrized by 
$\bigl(\varphi(y,\eta(y)),y,\eta(y)\bigr)$ divides $\widetilde H$ into two components and
$\gamma=\mathrm{pr}(\widetilde\gamma)$ is a $C^{r-1}$-curve embedded in $D_a(p)$.
Let $\widetilde H^+$ (resp.\ $\widetilde H^-$) be the closure of the upper (resp.\ lower) 
component of $\widetilde H\setminus \widetilde\gamma$.
For a sufficiently large $n_0\in\mathbb{N}$, 
the component $\widetilde H_0$ of $f^{n_0}(\widetilde H)\cap U_a(p)$ containing $q_0=f^{n_0}(\widehat q)$ is an  
\emph{unstable bent disk} in 
$U_a(p)$ such that $\partial \widetilde H_0$ is a simple closed $C^r$-curve  
in $\partial_{\mathrm{side}}U_a(p)$, where 
$$\partial_{\mathrm{side}}U_a(p)=\{(x,t)\in\mathbb{C}\times \mathbb{R}\,; |z|= a, 0\leq t< a\}\subset \partial U_a(p).$$
See Figure \ref{fig_H_0}.
We set $\widetilde\gamma_0=f^{n_0}(\widetilde\gamma)\cap \widetilde H_0$, $\widetilde H_0^+=f^{n_0}(\widetilde H^+)\cap \widetilde H_0$, 
$\widetilde H_0^-=f^{n_0}(\widetilde H^-)\cap \widetilde H_0$, $H_0=\mathrm{pr}(\widetilde H_0^+)=\mathrm{pr}(\widetilde H_0^-)$ 
and $\gamma_0=\mathrm{pr}(\widetilde \gamma_0)$. 
Then $\widetilde \gamma_0$ is called the \emph{front curve} of $\widetilde H_0$ and 
$\gamma_0$ is the \emph{folding curve} of $H_0$.
\begin{figure}[hbt]
\centering
\scalebox{0.6}{\includegraphics[clip]{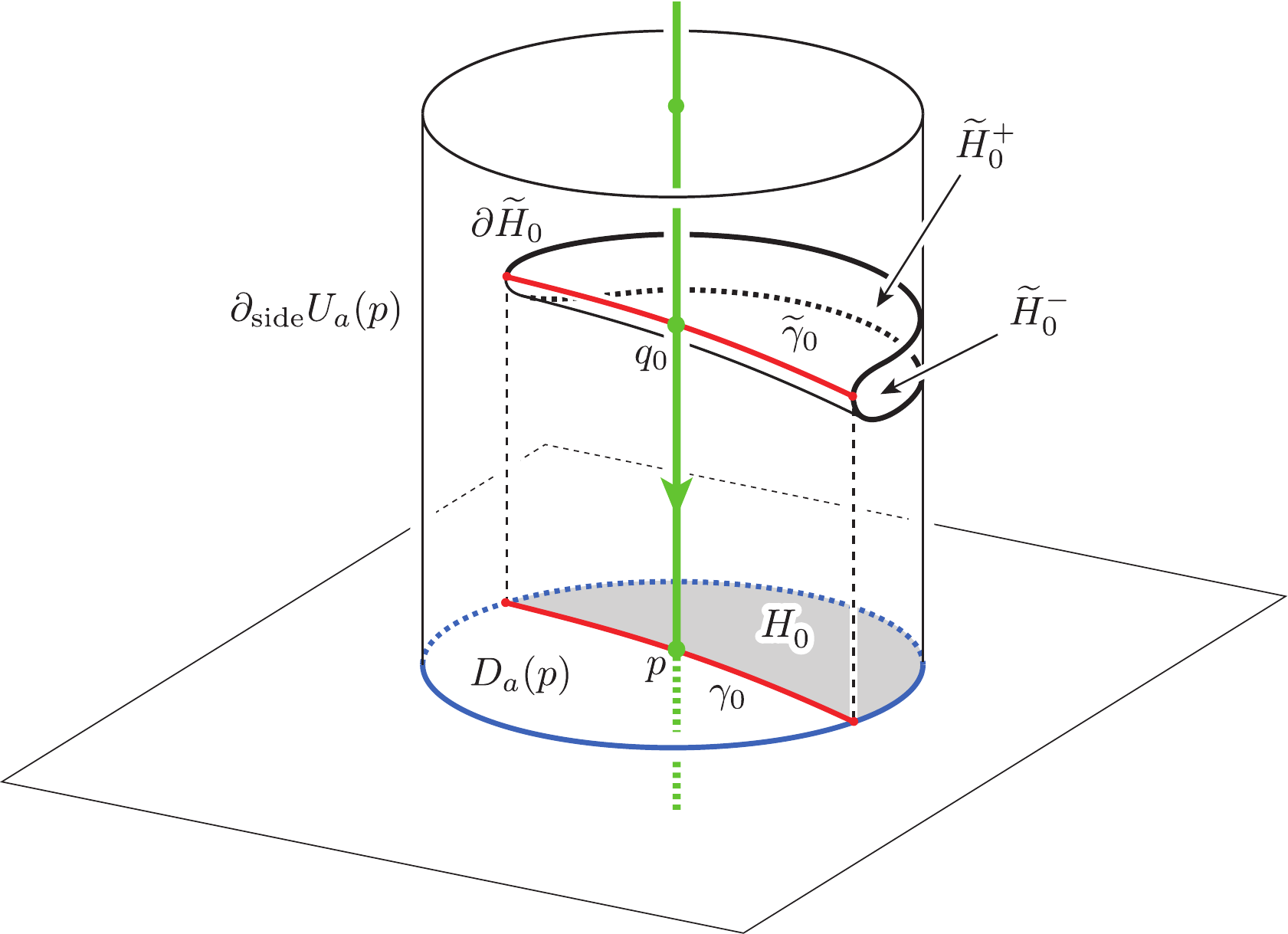}}
\caption{The front curve $\widetilde \gamma_0$ divides $\widetilde H_0$ into the two sheets  
$\widetilde H_0^+$ and $\widetilde H_0^-$.
The folding curve $\gamma_0$ of $H_0$ is the orthogonal image of $\widetilde\gamma_0$.}
\label{fig_H_0}
\end{figure}

We note that Nishizawa \cite{ni} has studied unstable bent disks similar to $\widetilde H_0$ as above in a 
different situation.
In fact, he considered a 3-dimensional diffeomorphism $g$ which has a saddle fixed point $s$ 
such that all the eigenvalues of $Dg(s)$ are real and has a homoclinic quadratic tangency associated 
with $s$.
Here we consider the component $\widetilde H_{0;u}^-$ of $f^u(\widetilde H_0^-)\cap U_a(p)$ containing $f^u(q_0)$ 
for $u\in\mathbb{N}$.
Since the homoclinic tangency $q$ is positively associated with $p$, 
one can show that there exists $\widetilde H_{0;u}^-$ which meets $W^s(p)$ transversely at a point $\widehat z$ 
near $q$ 
by using an argument similar to that in \cite[Lemma 4.4]{ni}.
See Figure \ref{fig_H_u}.
\begin{figure}[hbt]
\centering
\scalebox{0.6}{\includegraphics[clip]{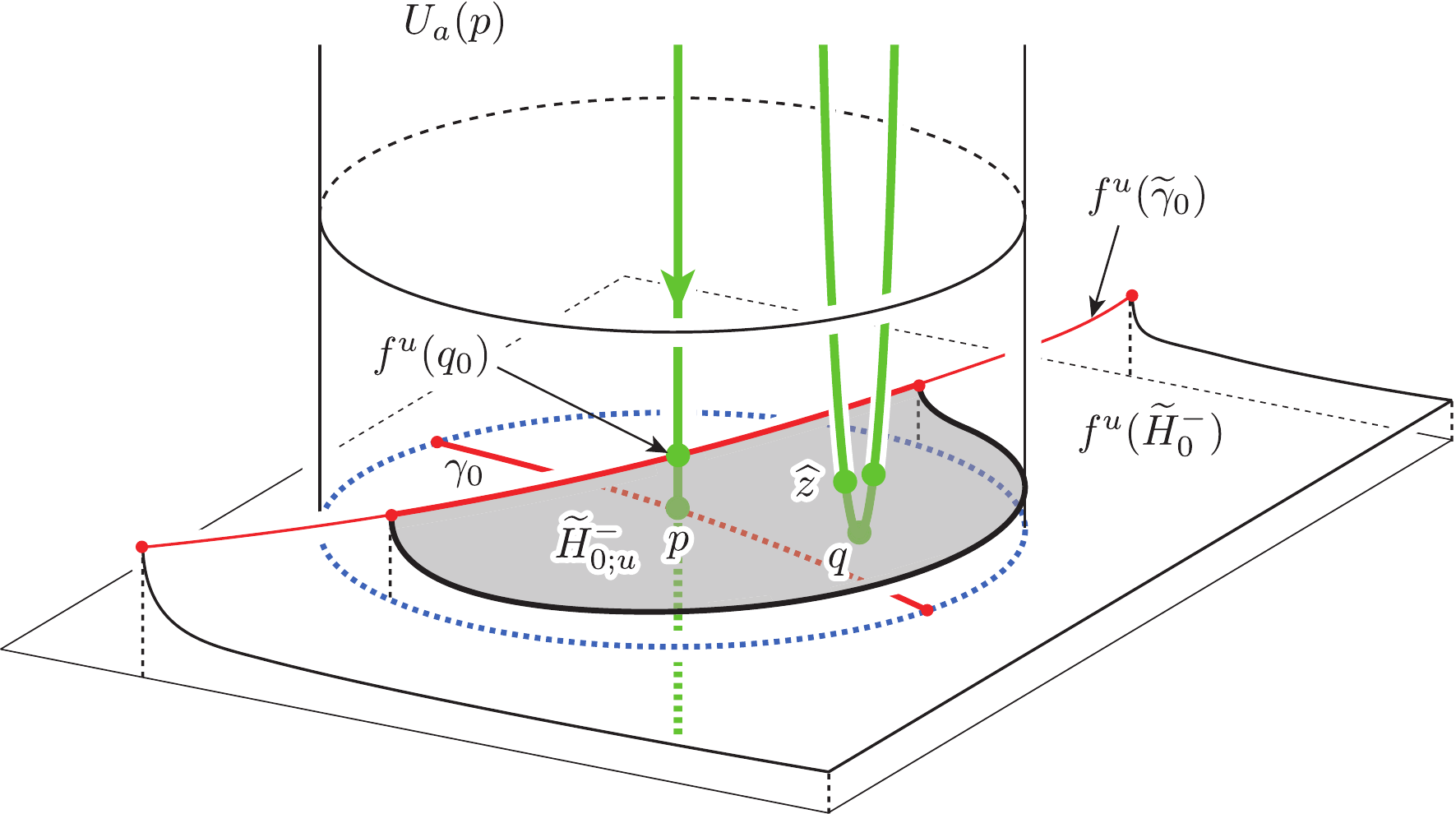}}
\caption{The half disk $\widetilde H_{0;u}^-$ meets $W^s(p)$ transversely at two points near $q$, one of which is $\widehat z$.}
\label{fig_H_u}
\end{figure}
To show the claim, the assumption of $\theta_0\neq 0\mod \pi$ in Theorem \ref{thm_A} is crucial.
In fact, the condition implies that the following property:
\begin{enumerate}[(P)]
\item
There exists an arbitrarily large $u$ such that 
the interior of $H_{0;u}=\mathrm{pr}(\widetilde H_{0;u}^-)$ in $D_a(p)$ contains $q$.
\end{enumerate}

\begin{remark}\label{r_exception}
(1)
We here suppose $\theta=0\mod \pi$.
Even in this case, if $f$ has the property (P), then the component 
of $W^s(p)$ 
containing $q$ and $W^u(p)$ have a homoclinic transverse intersection point.
Then Theorems \ref{thm_A} and \ref{thm_B} will be proved quite similarly.
Since $\theta=0\mod \pi$, all $f^u(\gamma_0)$ are tangent to a unique straight segment $\gamma_\infty$ 
in $D_a(p)$ at $p$.
Thus the property (P) is satisfied if $\gamma_\infty$ does not pass through $q$.

\medskip
\noindent(2)
Even in the case of $-1<\lambda<0$, one can show that $f$ has the property (P) similarly by using $f^2$ instead of $f$ 
if $2\theta\neq 0\mod \pi$.
Moreover, since either  $q$ or $f(q)$ is a homoclinic tangency positively associated with $p$, 
Theorems \ref{thm_A} and \ref{thm_B} hold without the assumption that $q$ is positively associated with 
$p$. 
\end{remark}

\subsection{Construction of convergent sequence of unstable bent disks}
Take $v\in\mathbb{N}$ such that $\widehat z_0=f^v(\widehat z)$ is a point $(0,\widehat t\,)$ contained in $U_a(p)$, 
where $\widehat z$ is the transverse intersection point of $\widetilde H_{0;u}^-$ and $W^s(p)$ given in the previous subsection.
Let $D$ be a small disk in $W^u(p)\cap U_a(p)$ whose interior contains $\widehat z_0$.
The \emph{absolute slope} $\sigma(\boldsymbol{v})$ of a vector $\boldsymbol{v}=(v_1,v_2,v_3)$ in $U_a(p)$ 
with $(v_1,v_2)\neq (0,0)$ 
is given as 
$$\sigma(\boldsymbol{v})=\frac{|v_3|}{\sqrt{v_1^2+v_2^2}}.$$
The \emph{maximum absolute slope} $\sigma(D)$ of $D$ is defined by
$$\sigma(D)=\max\{\sigma(\boldsymbol{v})\,;\, \text{unit vectors $\boldsymbol{v}$ in $U_a(p)$ tangent to $D$}\}.$$
Fix $m_0\in \mathbb{N}$ such that, for any $m\in \mathbb{N}\cup\{0\}$, the component $D_m$ of $f^{m_0+m}(D)\cap U(p)$ 
containing $f^{m_0+m}(\widehat z_0)$ is a properly embedded disk in $U_a(p)$ 
with $\partial D_m\subset \partial_{\mathrm{side}}U_a(p)$.
Note that $D_m$ intersects $W_{\mathrm{loc}}^s(p)$ transversely at $(0,\lambda^mt_0)$, where 
$t_0=\lambda^{m_0}\widehat t$.
See Figure \ref{fig_Dm}.
\begin{figure}[hbt]
\centering
\scalebox{0.6}{\includegraphics[clip]{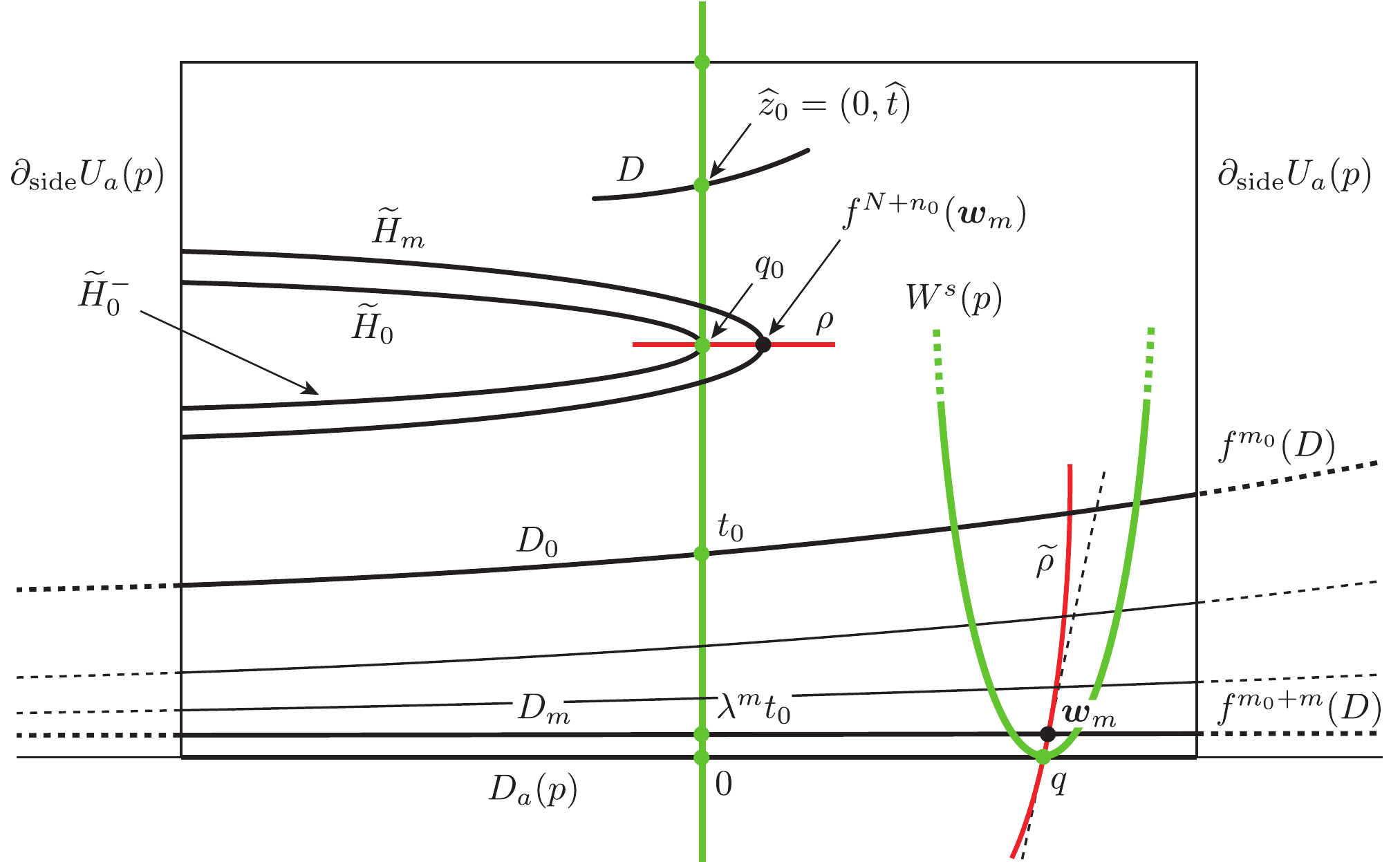}}
\caption{Trip from $\widetilde H_0^-$ to $\widetilde H_m$: $f^{u+v}(\widetilde H_0^-)\supset D$, 
$f^{m_0}(D)\supset D_0$, $f^m(D_0)\supset D_m$ and $f^{N+n_0}(D_m)\supset \widetilde H_m$, where 
$N$, $n_0$ are the positive integers with $f^N(q)=\widetilde q$ and $f^{n_0}(\widetilde q)=q_0$.
The dotted line passing through $q$ represents a straight segment tangent to $\widetilde \rho$ 
at $q$.}
\label{fig_Dm}
\end{figure}
The maximum absolute slope of $D_m$ satisfies 
\begin{equation}\label{eqn_sigma_D}
\sigma(D_m)\leq \sigma_0\lambda^mr^{-m},
\end{equation}
where 
$\sigma_0=\sigma(D)\lambda^{m_0}r^{-m_0}$.
Consider a short straight  segment $\rho$ in $U_a(p)$ meeting $\widetilde H_0$ orthogonally at $q_0$.
Then $\widetilde\rho=f^{-(N+n_0)}(\rho)$ is a $C^r$-curve meeting $D_a(p)$ transversely at $q$, where 
$N$, $n_0$ are the positive integers given as above.
One can choose $m_0\in\mathbb{N}$ so that, for any $m\in\mathbb{N}\cup\{0\}$, $\widetilde\rho$ meets $D_m$ transversely 
at a single point $\boldsymbol{w}_m=(z_m,s_m)$.
Then \eqref{eqn_sigma_D} implies that $|t_0\lambda^m-s_m|\leq \widetilde a\sigma_0\lambda^mr^{-m}$, where 
$\widetilde a=\sup_{m\geq 0}\{|z_m|\}<\infty$.
It follows that $s_m=t_0\lambda^m+O(\lambda^mr^{-m})$.
Since $\widetilde\rho$ has a tangency of order at least two with a straight segment at $q$, 
\begin{equation}\label{eqn_s_m}
\mathrm{dist}(\boldsymbol{w}_m,q)=\widetilde t_0\lambda^m+O(\lambda^mr^{-m})+O(\lambda^{2m})=\widetilde t_0\lambda^m
+o(\lambda^m)
\end{equation}
for some constant $\widetilde t_0>0$.
By the inclination lemma, $D_m$ uniformly $C^r$-converges to $D_a(p)$.
A short curve in $W^s(p)$ containing $q$ as an interior point meets $D_m$ transversely in two points 
for all sufficiently large $m$.
Let $\widetilde H_m$ be the component of $f^{N+n_0}(D_m)\cap U_a(p)$ containing $f^{N+n_0}(\boldsymbol{w}_m)$.
Then $\widetilde H_m$ $C^r$-converges to $\widetilde H_0$ as $m\to \infty$.
By \eqref{eqn_vp}, there exist $C^r$-functions $\varphi_m(y,t)$ $C^r$-converging to $\varphi$ and representing 
$\widetilde H_m$ as the graph of $x=\varphi_m(y,t)$.
Then the front curve $\widetilde \gamma_m$ in $\widetilde H_m$ is defined as the front curve $\widetilde\gamma_0$ in $\widetilde H_0$.
Since $\partial\varphi_m(y,t)/\partial t$ $C^{r-1}$-converges to $\partial \varphi(y,t)/\partial t$, $\widetilde\gamma_m$ 
also $C^{r-1}$-converges to $\widetilde\gamma_0$.
Note that $\widetilde \gamma_m$ divides $\widetilde H_m$ into the upper surface $\widetilde H_m^+$ and the lower surface $\widetilde H_m^-$ with 
$\widetilde \gamma_m=\widetilde H_m^+\cap \widetilde H_m^-$ and
$H_m=\mathrm{pr}(\widetilde H_m)=\mathrm{pr}(\widetilde H_m^+)=\mathrm{pr}(\widetilde H_m^-)$.
The image $\gamma_m=\mathrm{pr}(\widetilde \gamma_m)$ is called the folding curve of $H_m$.

\section{Limit straight segments}\label{S_limit}

A curve $\gamma$ in $D_a(p)$ is called a \emph{straight segment} if $\gamma$ is a segment with respect to the Euclidean metric on $D_a(p)$.
In this section, we will construct a proper straight segment $\gamma_0^\natural$ in $D_a(p)$ with 
$p\not\in\gamma_0^\natural$ which is mapped to a straight segment in $U_{a'}(p')$ by $h$.

\subsection{Sequences of folding curves converging to straight segments}
Let $\alpha$ be an oriented $C^{r-1}$-curve in $D_a(p)$ of bounded length.
Since $r-1\geq 2$, there exists the maximum absolute curvature $\kappa(\alpha)$ of $\alpha$.
If $\alpha$ passes near the center $0$ of $D_a(p)$ and satisfies $\kappa(\alpha)<1/a$, 
then $\alpha$ has a unique point $z(\alpha)$ with 
$\mathrm{dist}(0,z(\alpha))=\mathrm{dist}(0,\alpha)$.
In fact, if $\alpha$ had two points $z_i$ $(i=1,2)$ with $\mathrm{dist}(0,z_i)=\mathrm{dist}(0,\alpha)$, 
then for a point $z_3$ in $\alpha$ with the maximum $\mathrm{dist}(0,z_3)$ between $z_1$ and $z_2$, 
the curvature of $\alpha$ at $z_3$ is not less than $1/\mathrm{dist}(0,z_3)\geq 1/a$, a contradiction.
We denote by $\vartheta(\alpha)\mod 2\pi$ the angle between $\widehat\alpha$ and the positive direction of the $x$-axis at $0$, where 
$\widehat\alpha$ is the oriented curve in $D_a(p)$ obtained from $\alpha$ by the parallel translation 
taking $z(\alpha)$ to $0$.

By \eqref{eqn_s_m}, there exists a constant $\widetilde d_0>0$ such that
\begin{equation}\label{eqn_dcm}
\mathrm{dist}(\widetilde \gamma_m,\text{the $t$-axis})=\widetilde d_0(\widetilde t_0\lambda^m+o(\lambda^m))+o(\lambda^m)
=\widetilde d_0\widetilde t_0\lambda^m+o(\lambda^m).
\end{equation}
Since $\gamma_m$ $C^{r-1}$-converges to $\gamma_0$, $\kappa(\gamma_m)$ also converges to $\kappa(\gamma_0)$ 
as $m\to\infty$.
This shows that
\begin{equation}\label{eqn_kappa}
\sup_m\{\kappa(\gamma_m)\}=\kappa_0<\infty.
\end{equation}
It follows that, for all sufficiently large $m$, 
there exists a unique point $c_m$ of $\gamma_m$ with 
$$\mathrm{dist}(c_m,0)=\mathrm{dist}(\gamma_m,0)=\mathrm{dist}(\widetilde c_m,\text{the $t$-axis})
=\mathrm{dist}(\widetilde \gamma_m,\text{the $t$-axis}),$$
where $\widetilde c_m$ is the point of $\widetilde\gamma_m$ with $\mathrm{pr}(\widetilde c_m)=c_m$.

Fix $w$ with $0<w<a/2$ arbitrarily.
For any $n\in\mathbb{N}$, let  $m(n)$ be the minimum positive integer such that 
$f^n(c_m)$ is contained in $D_w(p)$ for any $m\geq m(n)$.
Then $\lim_{n\to \infty}m(n)=\infty$ holds.
For any $m\geq m(n)$, the component $\widetilde H_{m,n}$ of $f^n(\widetilde H_m)\cap U_a(p)$ containing $\widetilde c_{m,n}=f^n(\widetilde c_m)$ 
is a proper disk in $U_a(p)$ with $\partial \widetilde H_{m,n}\subset \partial_{\mathrm{side}}U_a(p)$.
Then $\widetilde \gamma_{m,n}=f^n(\widetilde \gamma_m)\cap \widetilde H_{m,n}$ is the front curve of $\widetilde H_{m,n}$ 
and $\gamma_{m,n}=\mathrm{pr}(\widetilde \gamma_{m,n})$ is the folding curve of $H_{m,n}=\mathrm{pr}(\widetilde H_{m,n})$.
Then $c_{m,n}=\mathrm{pr}(\widetilde c_{m,n})$ is a unique point of $\gamma_{m,n}$ closest to $0$.
Here we orient $\widetilde\gamma_m=\widetilde \gamma_{m,0}$ so that $\widetilde \gamma_{m,0}$ $C^{r-1}$-converges as oriented curves 
to $\widetilde \gamma_0$ as $m\to\infty$.
Suppose that $\gamma_{m,n}$ has the orientation induced from that on $\widetilde\gamma_{m,0}$ 
via $\mathrm{pr}\circ f^n$.
In particular, it follows that
\begin{equation}\label{eqn_theta}
\lim_{m\to \infty}\vartheta(\gamma_{m,0})=\vartheta(\gamma_0).
\end{equation}
We set $d_{m,n}=\mathrm{dist}(c_{m,n},0)$.
By \eqref{eqn_dcm},
\begin{equation}\label{eqn_dmn}
d_{m,n}=r^n(\widetilde d_0\widetilde t_0\lambda^m+o(\lambda^m)).
\end{equation}
There exist subsequences $\{m_j\}$, $\{n_j\}$ of $\mathbb{N}$ and $w\lambda/2\leq w_0\leq w$ such that 
\begin{equation}\label{eqn_limit_dt}
\lim_{j\to\infty}\widetilde d_0\widetilde t_0\lambda^{m_j}r^{n_j}=w_0.
\end{equation}
If necessary taking subsequences of $\{m_j\}$ and $\{n_j\}$ simultaneously, 
we may also assume that $\vartheta(\gamma_{m_j,n_j})$ has a limit $\theta^\natural$.
Since $f(z)=re^{\sqrt{-1}\theta}z$ on $D_a(p)$, by \eqref{eqn_kappa} we have 
$$\kappa(\gamma_{m_j,n_j})\leq r^{-n_j}\kappa(\gamma_{m_j,0})\leq r^{-n_j}\kappa_0
\to 0\quad\text{as}\quad j\to \infty.$$
Thus the following lemma is obtained immediately.

\begin{lemma}\label{l_gamma_star}
The sequence $\gamma_{m_j,n_j}$ uniformly converges as oriented curves to an oriented straight segment 
$\gamma_0^\natural$ in $D_a(p)$ with $\vartheta(\gamma_0^\natural)=\theta^\natural$ 
and $\mathrm{dist}(\gamma_0^\natural,0)=w_0$.
\end{lemma}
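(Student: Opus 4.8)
The plan is to combine the three convergence facts already assembled for the chosen subsequences: the curvature estimate $\kappa(\gamma_{m_j,n_j})\leq r^{-n_j}\kappa_0\to 0$ derived just above (with $\kappa_0$ as in \eqref{eqn_kappa}), the convergence of the distance $d_{m_j,n_j}\to w_0$, and the convergence of the angle $\vartheta(\gamma_{m_j,n_j})\to\theta^\natural$. Each of these controls one feature of the curves (flatness, position, direction), and together they pin down a unique straight segment toward which the $\gamma_{m_j,n_j}$ must converge uniformly. For the distance, I would note that \eqref{eqn_dmn} gives $d_{m_j,n_j}=\widetilde d_0\widetilde t_0\lambda^{m_j}r^{n_j}+r^{n_j}o(\lambda^{m_j})$, whose first term tends to $w_0$ by \eqref{eqn_limit_dt} while the error $r^{n_j}o(\lambda^{m_j})=(r^{n_j}\lambda^{m_j})\cdot o(\lambda^{m_j})/\lambda^{m_j}\to 0$ because the bracketed factor stays bounded; hence $d_{m_j,n_j}\to w_0$.

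First I would promote the scalar convergence $d_{m_j,n_j}\to w_0$ to convergence of the closest point $c_{m_j,n_j}$ itself. Since $c_{m_j,n_j}$ realizes the distance from $\gamma_{m_j,n_j}$ to $0$, the radius vector from $0$ to $c_{m_j,n_j}$ is orthogonal to the tangent of $\gamma_{m_j,n_j}$ at $c_{m_j,n_j}$, and by the definition of $\vartheta$ that tangent direction is exactly $\vartheta(\gamma_{m_j,n_j})$. Therefore the argument of $c_{m_j,n_j}$ equals $\vartheta(\gamma_{m_j,n_j})\pm\pi/2$, where the sign is eventually constant along the subsequence because $0$ lies on one fixed side of these nearly straight curves. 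Together with $|c_{m_j,n_j}|=d_{m_j,n_j}\to w_0$, this yields $c_{m_j,n_j}\to c_0^\natural$ for a single point $c_0^\natural$ with $|c_0^\natural|=w_0$.

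Next I would let $\ell_j$ be the tangent line to $\gamma_{m_j,n_j}$ at $c_{m_j,n_j}$ and estimate deviation from it. For a $C^2$ curve of curvature at most $\kappa$, a point at arc length $s$ from $c_{m_j,n_j}$ lies within $\tfrac12\kappa s^2$ of $\ell_j$; since every $\gamma_{m_j,n_j}$ is properly embedded in the bounded disk $D_a(p)$ its length is uniformly bounded, so $\kappa(\gamma_{m_j,n_j})\to 0$ forces the uniform distance from $\gamma_{m_j,n_j}$ to $\ell_j$ to tend to $0$. At the same time $\ell_j$, being the line through $c_{m_j,n_j}$ in direction $\vartheta(\gamma_{m_j,n_j})$, converges to the line through $c_0^\natural$ in direction $\theta^\natural$, whose intersection with $D_a(p)$ is the straight segment $\gamma_0^\natural$. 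A triangle-inequality argument then gives uniform convergence of the $\gamma_{m_j,n_j}$, as oriented curves, to $\gamma_0^\natural$, the orientation being the fixed one carried along from $\widetilde\gamma_{m,0}$. The two asserted properties follow from the limit: $\vartheta(\gamma_0^\natural)=\theta^\natural$ since direction is preserved, and $\mathrm{dist}(\gamma_0^\natural,0)=|c_0^\natural|=w_0$ since the orthogonality of the radius to the segment persists under the limit.

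The main obstacle will not be any single estimate but the passage from convergence of image sets to genuine uniform convergence of oriented parametrized curves. I must apply the quadratic curvature estimate uniformly along the entire arc, which requires the uniform length bound that comes from properness of $\gamma_{m_j,n_j}$ in $D_a(p)$ together with the vanishing curvature, and I must resolve the $\pm\pi/2$ sign ambiguity in locating $c_{m_j,n_j}$ consistently so that the limit respects orientation rather than only the Hausdorff distance of the underlying point sets.
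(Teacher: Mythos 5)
Your proposal is correct and follows essentially the same route as the paper: the paper's own proof consists precisely of assembling the three ingredients you use --- $d_{m_j,n_j}\to w_0$ from \eqref{eqn_dmn} and \eqref{eqn_limit_dt}, the angle convergence $\vartheta(\gamma_{m_j,n_j})\to\theta^\natural$, and the curvature bound $\kappa(\gamma_{m_j,n_j})\le r^{-n_j}\kappa_0\to 0$ --- and then declares the lemma immediate. You simply supply the elementary geometry (convergence of the closest points, the quadratic tangent-line estimate, the length bound, and the $\pm\pi/2$ sign stabilization) that the paper leaves implicit, and your resolution of those details is sound.
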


We say that $\gamma_0^\natural$ is the \emph{limit straight segment} of $\gamma_{m_j,n_j}$.

\subsection{Limit straight segments preserved by the conjugacy}
Let $U_{a'}(p')$, $U_{b'}(p')$ be the circular columns defined as $U_a(p)$ for some $0<a'<b'$ 
which are contained in a coordinate neighborhood around $p'$ with respect to which $f'$ is linear.
One can retake $a>0$ and choose such $a'$, $b'$ so that $U_{a'}(p')\subset h(U_a(p))\subset U_{b'}(p')$.

Let $\widetilde H_{m,n}'$ be the component of $h(\widetilde H_{m,n})\cap U_{a'}(p')$ defined as $\widetilde H_{m,n}$  and $\mathrm{pr}(\widetilde H_{m,n}')=H_{m,n}'$.
One can define the front and folding curves $\widetilde \gamma_{m,n}'$, $\gamma_{m,n}'$ in 
$\widetilde H_{m,n}'$ and $H_{m,n}'$ as $\widetilde\gamma_{m,n}$, $\gamma_{m,n}$ in $\widetilde H_{m,n}$ and $H_{m,n}$ respectively.
See Figure \ref{fig_UaUb}.
\begin{figure}[hbt]
\centering
\scalebox{0.6}{\includegraphics[clip]{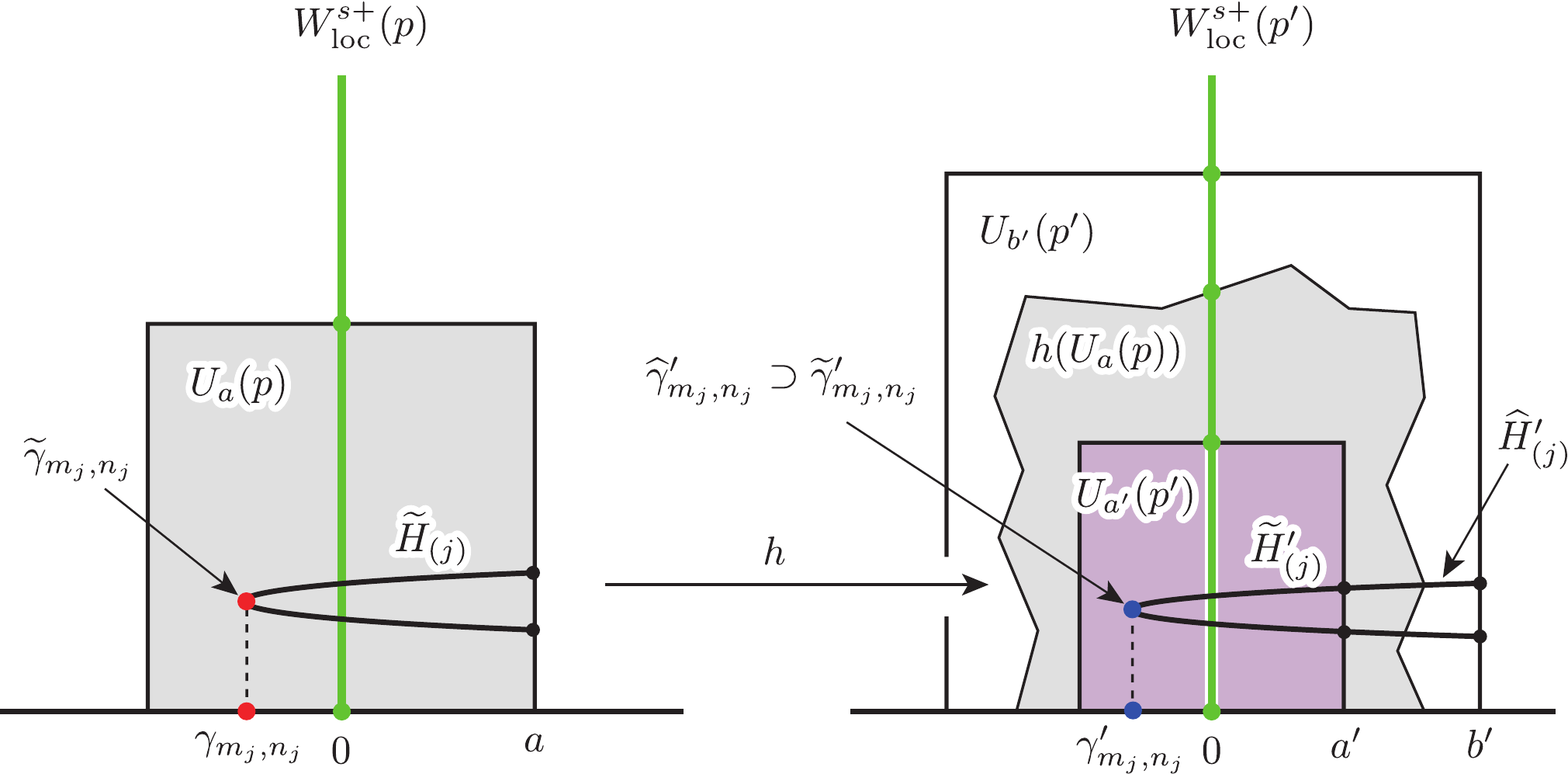}}
\caption{The image $h(\widetilde H_{(j)})$ is contained in $\widehat H_{(j)}'$, 
but $h(\widetilde H_{(j)}^\pm)$ is not necessarily contained in $\widehat H_{(j)}'^\pm$.}
\label{fig_UaUb}
\end{figure}

Since $h$ is only supposed to be a homeomorphism, $h(\widetilde\gamma_{m,n})\cap U_{a'}(p')$  
would not be equal to $\widetilde\gamma_{m,n}'$.
We will show that this equality holds in the limit.
For the sequences $\{m_j\}$, $\{n_j\}$ given in Section \ref{S_limit}, 
we set $\widetilde H_{m_j,n_j}=\widetilde H_{(j)}$, $H_{m_j,n_j}=H_{(j)}$, $\widetilde H_{m_j,n_j}'=\widetilde H_{(j)}'$ and $H_{m_j,n_j}'=H_{(j)}'$ for simplicity.
Similarly, suppose that $\widehat H_{(j)}'$ is the component of $W^u(p')\cap U_{b'}(p')$ containing $\widetilde H_{(j)}'$ and $\widehat\gamma_{m_j,n_1}'$ is the front curve of $\widehat H_{(j)}'$. 
The distance between $\boldsymbol{x}$, $\boldsymbol{y}$ in $U_a(p)$ is denoted by $d(\boldsymbol{x},\boldsymbol{y})$ and 
that between $\boldsymbol{x}'$, $\boldsymbol{y}'$ in $U_{a'}(p')$ by $d'(\boldsymbol{x}',\boldsymbol{y}')$.

The \emph{path metric} on $\widetilde H_{(j)}$ is denoted by $d_{\widetilde H_{(j)}}$.
That is, for any $\boldsymbol{x}$, $\boldsymbol{y}\in \widetilde H_{(j)}$, $d_{\widetilde H_{(j)}}(\boldsymbol{x},\boldsymbol{y})$ is the length of a shortest piecewise smooth curve in $\widetilde H_{(j)}$ 
connecting $\boldsymbol{x}$ with $\boldsymbol{y}$.
The path metrics $d_{\widetilde H_{(j)}'}$ on $\widetilde H_{(j)}'$ and $d_{\widehat H_{(j)}'}$ on $\widehat H_{(j)}'$ 
are defined similarly.

\begin{lemma}\label{l_d_H}
\begin{enumerate}[\rm (i)]
\item
For any $\varepsilon>0$, there exists a constant $\eta(\varepsilon)>0$ independent of $j\in\mathbb{N}$ and satisfying the 
following conditions.
\begin{itemize}
\item
$\lim_{\varepsilon\to 0}\eta(\varepsilon)=0$.
\item
Let $\boldsymbol{x}$, $\boldsymbol{y}$ be any points of $\widetilde H_{(j)}$ both of which are contained in one 
of $\widetilde H_{(j)}^+$ and $\widetilde H_{(j)}^-$.
If $d(\boldsymbol{x},\boldsymbol{y})<\eta(\varepsilon)$, then $d_{\widetilde H_{(j)}}(\boldsymbol{x},\boldsymbol{y})<\varepsilon$.
\end{itemize}
\item
For any $\varepsilon>0$, there exists a constant $\delta(\varepsilon)>0$ independent of $j\in\mathbb{N}$ and satisfying the 
following conditions.
\begin{itemize}
\item
$\lim_{\varepsilon\to 0}\delta(\varepsilon)=0$.
\item
Let $\boldsymbol{x}$, $\boldsymbol{y}$ be any points of $\widetilde H_{(j)}$ both of which are contained in one of $\widetilde H_{(j)}^+$ and $\widetilde H_{(j)}^-$.
If $d_{\widetilde H_{(j)}}(\boldsymbol{x},\boldsymbol{y})<\delta(\varepsilon)$ and $\boldsymbol{x}'=h(\boldsymbol{x})$ and $\boldsymbol{y}'=h(\boldsymbol{y})$ are contained in 
$\widetilde H_{(j)}'$, then $d_{\widetilde H_{(j)}'}(\boldsymbol{x}',\boldsymbol{y}')<\varepsilon$.
\end{itemize}
\end{enumerate}
\end{lemma}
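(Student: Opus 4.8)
The plan is to establish (i) first and then derive (ii) from (i) together with the uniform continuity of the fixed homeomorphism $h$. Throughout, the key structural fact I would use is that $f|_{D_a(p)}$ acts as $z\mapsto re^{\sqrt{-1}\theta}z$ while contracting the $t$-direction by $\lambda$, so that each forward application of $f$ multiplies the absolute slope of any tangent vector by $\lambda/r<1$. Combined with the slope estimate \eqref{eqn_sigma_D} for the base disks, this shows that the maximum absolute slope of each sheet $\widetilde H_{(j)}^{\pm}$ is bounded by a constant independent of $j$ (indeed it tends to $0$). Hence each sheet is the graph over a planar region $H_{(j)}\subset D_a(p)$ of a function with uniformly small gradient, so the path metric on a single sheet is uniformly bi-Lipschitz to the Euclidean metric on $H_{(j)}$.

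For part (i) it then remains to control the intrinsic geometry of $H_{(j)}$. The region $H_{(j)}$ is bounded by the folding curve $\gamma_{(j)}$, whose curvature is uniformly bounded by \eqref{eqn_kappa} and which converges to the straight segment $\gamma_0^{\natural}$, together with arcs of $\partial D_a(p)$. Thus for all large $j$ the region $H_{(j)}$ is a uniformly quasiconvex perturbation of a circular segment, so a Euclidean segment joining projections $\bar{\boldsymbol{x}},\bar{\boldsymbol{y}}$ may be replaced by a path in $H_{(j)}$ of comparable length, which lifts to a short path on the sheet. The finitely many remaining $j$ are handled individually, and taking the minimum of the resulting moduli produces a single $\eta(\varepsilon)$ with $\lim_{\varepsilon\to 0}\eta(\varepsilon)=0$. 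The restriction of $\boldsymbol{x},\boldsymbol{y}$ to one sheet is essential: after the flattening, two points on opposite sheets over nearby projections are Euclidean-close (their $t$-separation is of order $\lambda^{n}$) yet any surface path between them must detour to the fold.

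For part (ii), the first observation is that $d(\boldsymbol{x},\boldsymbol{y})\le d_{\widetilde H_{(j)}}(\boldsymbol{x},\boldsymbol{y})<\delta(\varepsilon)$, since any surface path is at least as long as the ambient segment. Because $h$ is a fixed homeomorphism of the compact set $\overline{U_a(p)}$, it is uniformly continuous there with a modulus $\omega_h$ independent of $j$, whence $d'(\boldsymbol{x}',\boldsymbol{y}')\le\omega_h(\delta)$. If $\boldsymbol{x}'$ and $\boldsymbol{y}'$ lay on one sheet of $\widetilde H_{(j)}'$, part (i) applied on the primed side (with its modulus $\eta'$) would immediately give $d_{\widetilde H_{(j)}'}'(\boldsymbol{x}',\boldsymbol{y}')<\varepsilon$. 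The difficulty, which I expect to be the \textbf{main obstacle}, is that $h$ need not respect the folding structure (compare the caption of Figure \ref{fig_UaUb}), so $\boldsymbol{x}'$ and $\boldsymbol{y}'$ may lie on opposite sheets $\widetilde H_{(j)}'^{+}$ and $\widetilde H_{(j)}'^{-}$, where the intrinsic distance could a priori be large.

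To resolve this I would argue by connectedness rather than by any scaling estimate. Choose a path $\beta$ in $\widetilde H_{(j)}$ joining $\boldsymbol{x}$ to $\boldsymbol{y}$ of length less than $2\delta$; then any two of its points are within $2\delta$ in $U_a(p)$, so $h(\beta)\subset h(\widetilde H_{(j)})\subset\widehat H_{(j)}'$ is a connected curve from $\boldsymbol{x}'$ to $\boldsymbol{y}'$ of Euclidean diameter at most $\omega_h(2\delta)$. If the endpoints lie on one sheet, part (i) finishes the argument. Otherwise $h(\beta)$, being a surface curve connecting the two sheets of $\widehat H_{(j)}'$, must meet the folding curve at some point $\boldsymbol{c}'$; then $d'(\boldsymbol{x}',\boldsymbol{c}')$ and $d'(\boldsymbol{c}',\boldsymbol{y}')$ are each at most $\omega_h(2\delta)$, and each of the pairs $\{\boldsymbol{x}',\boldsymbol{c}'\}$ and $\{\boldsymbol{c}',\boldsymbol{y}'\}$ lies on a single closed sheet. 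Choosing $\delta(\varepsilon)$ so small that $\omega_h(2\delta)<\eta'(\varepsilon/2)$ (possible since $\omega_h(0^{+})=0$, and forcing $\delta(\varepsilon)\to0$ as $\varepsilon\to0$), part (i) on the primed side bounds each of $d_{\widetilde H_{(j)}'}'(\boldsymbol{x}',\boldsymbol{c}')$ and $d_{\widetilde H_{(j)}'}'(\boldsymbol{c}',\boldsymbol{y}')$ by $\varepsilon/2$, and the triangle inequality yields $d_{\widetilde H_{(j)}'}'(\boldsymbol{x}',\boldsymbol{y}')<\varepsilon$. Since $\omega_h$, $\eta'$ and the slope bounds are all independent of $j$, so is $\delta(\varepsilon)$. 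The one point requiring care is that $\boldsymbol{c}'$ must lie in $\widetilde H_{(j)}'$ rather than only in $\widehat H_{(j)}'$; this is ensured by the small diameter of $h(\beta)$ once the margin between $a'$ and $b'$ is fixed and the relevant points are kept interior to $U_{a'}(p')$, which holds because the folding curves converge to the interior segment $\gamma_0^{\natural}$.
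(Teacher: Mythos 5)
Your part (ii) is essentially the paper's own argument: you use uniform continuity of $h$ together with part (i) applied on the primed side for the one-sheet case, and in the opposite-sheet case you take a short path in $\widetilde H_{(j)}$, observe that its $h$-image lies in $\widehat H_{(j)}'$ and must therefore cross the front curve of $\widehat H_{(j)}'$, split at a crossing point, and finish with the one-sheet case at scale $\varepsilon/2$ and the triangle inequality. That is exactly the paper's proof (the paper takes the shortest path and phrases the one-sheet case contrapositively, but these are cosmetic differences). One terminological slip: the curve your $\boldsymbol{c}'$ lies on is the \emph{front} curve $\widehat\gamma_{m_j,n_j}'$, not the folding curve, which is its projection into $D_{a'}(p')$ and does not lie on the surface.

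However, your justification of part (i) rests on a false geometric premise. You claim that the maximum absolute slope of each sheet $\widetilde H_{(j)}^{\pm}$ is uniformly bounded (indeed tends to $0$), so that each sheet is a small-gradient graph over $H_{(j)}$ whose path metric is uniformly bi-Lipschitz to the Euclidean metric on $H_{(j)}$. This is not true: by \eqref{eqn_vp} the bent disk is a graph $x=\varphi(y,t)$ with $\partial\varphi/\partial t=0$ along the front curve, so along $\widetilde\gamma_{m_j,n_j}=\widetilde H_{(j)}^{+}\cap\widetilde H_{(j)}^{-}$ the tangent plane contains the vertical vector $(0,0,1)$, and the absolute slope is unbounded on each closed sheet. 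The contraction ``each application of $f$ multiplies slopes by $\lambda/r$'' does not remove this: $Df$ maps vertical vectors to vertical vectors, so the vertical tangency along the front curve persists in every iterate; likewise \eqref{eqn_sigma_D} controls the nearly horizontal disks $D_m$, but the fold is created by the fixed map $f^{N+n_0}$ before that estimate can be propagated. Concretely, the bi-Lipschitz claim fails: two points of the same sheet lying at heights $\tau+s$ and $\tau+2s$ above a quadratic fold project to points at distance $O(s^{2})$ in $H_{(j)}$, while their intrinsic distance is at least $s$. (This does not contradict statement (i), whose hypothesis involves the ambient three-dimensional distance, which is at least $s$ for such a pair; it only breaks your reduction to the projected planar distance.) Statement (i) is of course true, and the paper obtains it from the fact that the closed sheets $\widetilde H_{(j)}^{\pm}$, with their path metrics, converge uniformly to the flat half disk $H^\natural\subset D_a(p)$ on which the path metric and the Euclidean metric coincide; replacing your slope argument by this convergence argument repairs the proof, and nothing else in your proposal is affected, since your part (ii) uses (i) only as a black box.
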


One can take these constants $\eta(\varepsilon)$, $\delta(\varepsilon)$ so that they work also for 
$d_{\widetilde H_{(j)}'}$ and $d_{\widehat H_{(j)}'}$.

\begin{proof}
(i)
The assertion is proved immediately from the fact that 
$\widetilde H_{(j)}^\pm$ uniformly converges to a disk $H^\natural$ in $D_a(p)$ 
such that $d(\boldsymbol{x},\boldsymbol{y})=d_{H^\natural}(\boldsymbol{x},\boldsymbol{y})$ for any $\boldsymbol{x},\boldsymbol{y}\in H^\natural$.
\smallskip

\noindent(ii)
Suppose that $\boldsymbol{x},\boldsymbol{y}\in \widetilde H_{(j)}^+$.
First we consider the case that both $\boldsymbol{x}'$ and $\boldsymbol{y}'$ are contained in one of $\widetilde H_{(j)}'^+$ and $\widetilde H_{(j)}'^-$, say $\widetilde H_{(j)}'^+$.
If $d_{\widetilde H_{(j)}'}(\boldsymbol{x}',\boldsymbol{y}')\geq \varepsilon$, then it follows from  the assertion (i) that $d'(\boldsymbol{x}',\boldsymbol{y}')\geq \eta(\varepsilon)$.
Since $h$ is uniformly continuous on $U_a(p)$, there exists a constant $\delta_1(\varepsilon)>0$ 
with $\lim_{\varepsilon\to 0}\delta_1(\varepsilon)=0$ and $d(\boldsymbol{x},\boldsymbol{y})\geq \delta_1(\varepsilon)$.
Hence, in particular, $d_{\widetilde H_{(j)}}(\boldsymbol{x},\boldsymbol{y})\geq \delta_1(\varepsilon)$.
Thus $d_{\widetilde H_{(j)}}(\boldsymbol{x},\boldsymbol{y})< \delta_1(\varepsilon)$ implies $d_{\widetilde H_{(j)}'}(\boldsymbol{x}',\boldsymbol{y}')<\varepsilon$.

Next we suppose that $\boldsymbol{x}'\in \widetilde H_{(j)}'^+$ and $\boldsymbol{y}'\in \widetilde H_{(j)}'^-$.
Consider a shortest curve $\alpha$ in $\widetilde H_{(j)}$ connecting $\boldsymbol{x}$ and $\boldsymbol{y}$.
Since $\alpha'=h(\alpha)$ is contained in $\widehat H_{(j)}'$, $\alpha'$ intersects $\widehat\gamma_{m_j,n_j}'$ non-trivially.
Let $\boldsymbol{z}$ be one of the intersection points of $\alpha$ with $h^{-1}(\widehat\gamma_{m_j,n_j}')$.
See Figure \ref{fig_d_H}.
\begin{figure}[hbt]
\centering
\scalebox{0.6}{\includegraphics[clip]{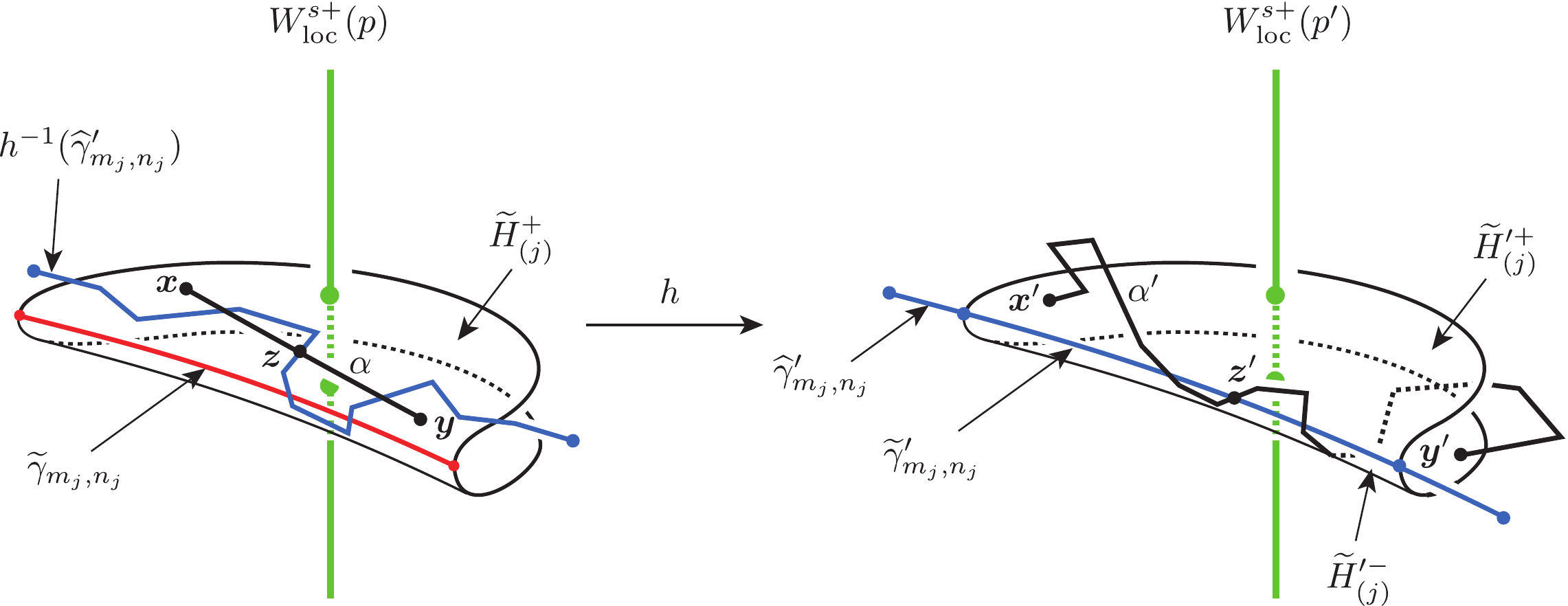}}
\caption{The case of $\boldsymbol{x},\boldsymbol{y}\in \widetilde H_{(j)}^+$, 
$\boldsymbol{x}'\in \widetilde H_{(j)}'^+$ and $\boldsymbol{y}'\in \widetilde H_{(j)}'^-$.}
\label{fig_d_H}
\end{figure}
Suppose that $d_{\widetilde H_{(j)}}(\boldsymbol{x},\boldsymbol{y})<\delta_1(\varepsilon/2)$.
Since $d_{\widetilde H_{(j)}}(\boldsymbol{x},\boldsymbol{y})=d_{\widetilde H_{(j)}}(\boldsymbol{x},\boldsymbol{z})+d_{\widetilde H_{(j)}}(\boldsymbol{z},\boldsymbol{y})$, 
$$d_{\widetilde H_{(j)}}(\boldsymbol{x},\boldsymbol{z})<\delta_1(\varepsilon/2)\quad\text{and}\quad d_{\widetilde H_{(j)}}(\boldsymbol{z},\boldsymbol{y})<\delta_1(\varepsilon/2).$$
Since $\boldsymbol{x}',\boldsymbol{z}'\in \widehat H_{(j)}'^+$ and $\boldsymbol{z}',\boldsymbol{y}'\in \widehat H_{(j)}'^-$, by the result in the previous case 
we have $d_{\widehat H_{(j)}'}(\boldsymbol{x}',\boldsymbol{z}')<\varepsilon/2$ and $d_{\widehat H_{(j)}'}(\boldsymbol{z}',\boldsymbol{y}')<\varepsilon/2$, and hence 
$$d_{\widetilde H_{(j)}'}(\boldsymbol{x}',\boldsymbol{y}')=d_{\widehat H_{(j)}'}(\boldsymbol{x}',\boldsymbol{y}')<\varepsilon.$$
Thus $\delta(\varepsilon):=\delta_1(\varepsilon/2)$ satisfies the conditions of (ii).
\end{proof}

The following result is a key of this paper.

\begin{lemma}\label{l_HH}
For any $\varepsilon>0$, there exists $j_0\in\mathbb{N}$ such that, for any $j\geq j_0$,  
$$h(\widetilde\gamma_{m_j,n_j})\cap \widetilde H_{(j)}'\subset \mathcal{N}_\varepsilon(\widetilde\gamma_{m_j,n_j}',\widetilde H_{(j)}'),$$
where $\mathcal{N}_\varepsilon(\widetilde\gamma_{m_j,n_j}',\widetilde H_{(j)}')$ is the $\varepsilon$-neighborhood of 
$\widetilde\gamma_{m_j,n_j}'$ in $\widetilde H_{(j)}'$.
\end{lemma}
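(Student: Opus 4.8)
The plan is to characterize the front curve metrically and then exploit that a topological conjugacy approximately preserves the relation ``opposite sheets, ambient-close, path-far''. Recall that $\widetilde\gamma_{(j)}=\widetilde H_{(j)}^+\cap\widetilde H_{(j)}^-$ is the crease along which the bent disk folds, and that, by the flattening used in the proof of Lemma \ref{l_d_H}(i), both sheets $\widetilde H_{(j)}^\pm$ converge uniformly to a horizontal disk $H^\natural$ as $j\to\infty$; the same holds on the primed side for $\widetilde H_{(j)}'^\pm$ and $\widehat H_{(j)}'^\pm$. The consequence I will use is that a pair $\boldsymbol b^+\in\widetilde H_{(j)}^+$, $\boldsymbol b^-\in\widetilde H_{(j)}^-$ with a common projection $\mathrm{pr}(\boldsymbol b^+)=\mathrm{pr}(\boldsymbol b^-)$ is arbitrarily close in the ambient metric $d$ once $j$ is large, whereas $d_{\widetilde H_{(j)}}(\boldsymbol b^+,\boldsymbol b^-)$ is bounded below by roughly twice the distance of $\mathrm{pr}(\boldsymbol b^\pm)$ to the folding curve, since any path in $\widetilde H_{(j)}$ joining them must cross $\widetilde\gamma_{(j)}$.

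First I would establish a separation mechanism: if $\boldsymbol b^+\in\widetilde H_{(j)}^+$ and $\boldsymbol b^-\in\widetilde H_{(j)}^-$ satisfy $d(\boldsymbol b^+,\boldsymbol b^-)$ small but $d_{\widetilde H_{(j)}}(\boldsymbol b^+,\boldsymbol b^-)\ge\varepsilon$, then $h(\boldsymbol b^+)$ and $h(\boldsymbol b^-)$ lie in different sheets of $\widehat H_{(j)}'$, and are therefore separated by the front curve $\widehat\gamma_{m_j,n_j}'$. Indeed, both images lie in $\widehat H_{(j)}'$ because $h(\widetilde H_{(j)})\subset\widehat H_{(j)}'$, and they are $d'$-close by uniform continuity of $h$; were they on the same sheet, Lemma \ref{l_d_H}(i) for $\widehat H_{(j)}'$ would make them path-close there, and then the version of Lemma \ref{l_d_H}(ii) for the conjugacy $h^{-1}$ would force $d_{\widetilde H_{(j)}}(\boldsymbol b^+,\boldsymbol b^-)$ small, a contradiction. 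This is precisely the step for which $\widehat H_{(j)}'$ was introduced: it furnishes a single globally sheeted disk containing all the images, even though $h$ need not preserve the sheet decomposition of $\widetilde H_{(j)}$ (Figure \ref{fig_UaUb}).

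Next, given $\varepsilon>0$ and $j$ large, for each $\boldsymbol c\in\widetilde\gamma_{(j)}$ I would produce a pair $\boldsymbol b^\pm$ satisfying the hypotheses of the mechanism and lying path-close to $\boldsymbol c$. Concretely, take $b^*$ in the interior of $H_{(j)}$ at distance $\sim\varepsilon$ from the folding curve and near $\mathrm{pr}(\boldsymbol c)$, and let $\boldsymbol b^\pm$ be its two preimages on $\widetilde H_{(j)}^\pm$; by flattening these are ambient-close yet path-far (at distance $\gtrsim\varepsilon$), while each lies within path distance $O(\varepsilon)$ of $\boldsymbol c$ on its own sheet. The mechanism shows $\widehat\gamma_{m_j,n_j}'$ separates $h(\boldsymbol b^+)$ from $h(\boldsymbol b^-)$, and Lemma \ref{l_d_H}(ii) (applied on each sheet, in the form valid for $\widehat H_{(j)}'$) bounds $d_{\widehat H_{(j)}'}(h(\boldsymbol b^\pm),h(\boldsymbol c))$ by $O(\varepsilon)$. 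Hence a path through $h(\boldsymbol c)$ of length $O(\varepsilon)$ joins two points on opposite sides of $\widehat\gamma_{m_j,n_j}'$, so it crosses that curve within $O(\varepsilon)$ of $h(\boldsymbol c)$. Thus $h(\boldsymbol c)$ lies within $O(\varepsilon)$ of $\widehat\gamma_{m_j,n_j}'$; restricting to $\widetilde H_{(j)}'$ and using that $\widetilde\gamma_{m_j,n_j}'$ is the part of $\widehat\gamma_{m_j,n_j}'$ inside $\widetilde H_{(j)}'$, any such $h(\boldsymbol c)\in\widetilde H_{(j)}'$ lies in $\mathcal N_\varepsilon(\widetilde\gamma_{m_j,n_j}',\widetilde H_{(j)}')$ after renaming the constant, with $j_0$ chosen so large that the flattening estimates hold within the prescribed tolerance.

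The main obstacle I anticipate is the simultaneous bookkeeping of uniformity in $j$ across the ambient metrics $d,d'$ and the path metrics $d_{\widetilde H_{(j)}},d_{\widetilde H_{(j)}'},d_{\widehat H_{(j)}'}$: the argument runs only because the constants $\eta(\varepsilon),\delta(\varepsilon)$ of Lemma \ref{l_d_H} are independent of $j$ and apply equally to all three path metrics, and because the convergence of the sheets to $H^\natural$ is uniform. A secondary subtlety is that $h$ may carry part of $\widetilde H_{(j)}^-$ into $\widetilde H_{(j)}'^+$, so the separation must be detected on the larger disk $\widehat H_{(j)}'$ and only afterwards transferred to $\widetilde H_{(j)}'$; tracking which images remain in $\widetilde H_{(j)}'$ rather than merely in $\widehat H_{(j)}'$ is exactly why the statement intersects with $\widetilde H_{(j)}'$.
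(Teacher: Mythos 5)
Your overall strategy (detect the front curve through the dichotomy ``ambient-close but path-far,'' and read off separation in the sheets of $\widehat H_{(j)}'$) uses the same toolkit as the paper, but your pivotal step fails as written. The ``separation mechanism'' rests on ``the version of Lemma \ref{l_d_H}(ii) for the conjugacy $h^{-1}$,'' and that version is not available by symmetry; indeed it cannot be proved by mirroring the paper's argument precisely in the case where you need it. The proof of Lemma \ref{l_d_H}(ii) in its mixed-sheet case takes a shortest path $\alpha$ \emph{inside} $\widetilde H_{(j)}$ and pushes it forward, using the containment $h(\widetilde H_{(j)})\subset\widehat H_{(j)}'$ to guarantee that $h(\alpha)$ lies in a disk carrying a sheet structure and hence crosses $\widehat\gamma_{m_j,n_j}'$. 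The mirrored statement for $h^{-1}$ would require pulling a shortest path of $\widehat H_{(j)}'$ back into a controlled disk; but $h^{-1}(\widehat H_{(j)}')$ is only a connected subset of $W^u(p)$, not contained in $\widetilde H_{(j)}$ nor even in $U_a(p)$ (only $h^{-1}(\widetilde H_{(j)}')\subset\widetilde H_{(j)}$ is known, and your points $h(\boldsymbol{b}^{\pm})$ are known to lie merely in $\widehat H_{(j)}'$, not in $\widetilde H_{(j)}'$). Since your pair $\boldsymbol{b}^{+},\boldsymbol{b}^{-}$ lies by construction on \emph{different} sheets of $\widetilde H_{(j)}$, you are invoking exactly the mixed case of the mirrored lemma, i.e.\ the unproven one; the same-sheet case, which is genuinely symmetric and does hold, never applies to your pairs.

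The paper avoids this by running the test pair in the opposite direction: it takes $\boldsymbol{x}'$ in a sheet of $\widetilde H_{(j)}'$ at distance at least $\varepsilon$ from $\widetilde\gamma_{m_j,n_j}'$, pairs it with the point $\boldsymbol{y}'$ of the opposite primed sheet having the same projection, pulls both back into $\widetilde H_{(j)}$ (legitimate, since $h^{-1}(\widetilde H_{(j)}')\subset\widetilde H_{(j)}$), and assumes for contradiction that the preimages lie on one unprimed sheet --- which is exactly the hypothesis of the proven Lemma \ref{l_d_H}(ii); its conclusion then contradicts the fact that any path in $\widetilde H_{(j)}'$ joining opposite primed sheets must cross $\widetilde\gamma_{m_j,n_j}'$. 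All containments thus point the right way, and connectedness of $\widetilde H_{(j)}'^{\sigma}\setminus \mathcal{N}_\varepsilon(\widetilde\gamma_{m_j,n_j}',\widetilde H_{(j)}')$ finishes the proof without any pointwise squeezing. Your argument could likely be repaired without the mirrored lemma: the short path in $\widehat H_{(j)}'$ furnished by Lemma \ref{l_d_H}(i) has small $d'$-diameter, so its $h^{-1}$-image has small $d$-diameter and, joining opposite sheets of $\widetilde H_{(j)}$, must meet either $\widetilde\gamma_{m_j,n_j}$ or $\partial\widetilde H_{(j)}\subset\partial_{\mathrm{side}}U_a(p)$, both of which are at definite distance from $\boldsymbol{b}^{\pm}$ --- but only if $\boldsymbol{c}$ is kept away from $\partial_{\mathrm{side}}U_a(p)$, a boundary effect you would additionally have to control (e.g.\ by arranging $h^{-1}(U_{a'}(p'))$ to be compactly contained in the interior of $U_a(p)$). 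As submitted, the proposal has a genuine gap at its central step.
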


Figure \ref{fig_HH} illustrates the situation of Lemma \ref{l_HH}.
\begin{figure}[hbt]
\centering
\scalebox{0.6}{\includegraphics[clip]{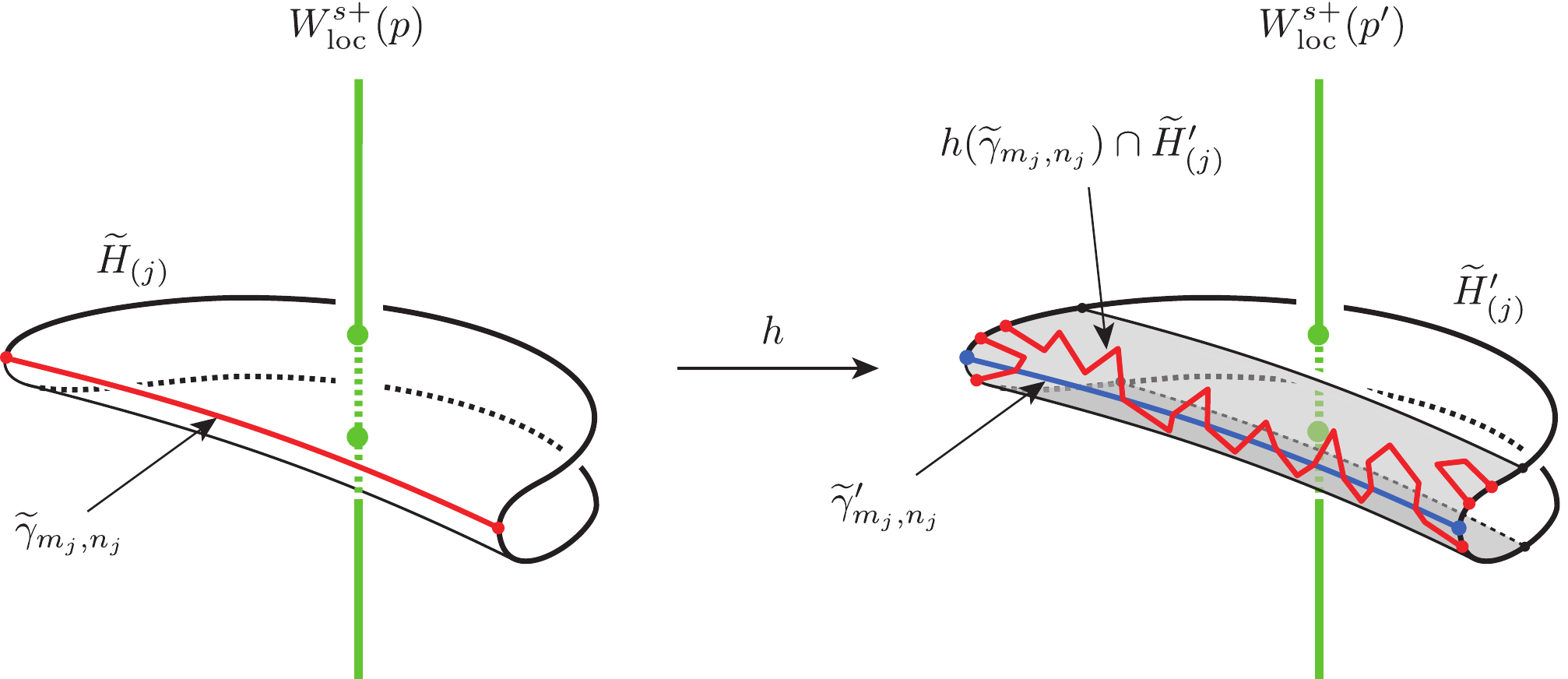}}
\caption{The shaded region represents $\mathcal{N}_\varepsilon(\widetilde\gamma_{m_j,n_j}',\widetilde H_{(j)}')$.}
\label{fig_HH}
\end{figure}

\begin{proof}
For $\sigma=\pm$, we will show that
$h^{-1}(\widetilde H_{(j)}'^{\sigma}\setminus \mathcal{N}_\varepsilon(\gamma_{m_j,n_j}',\widetilde H_{(j)}'))\subset \widetilde H_{(j)}^{\sigma}$ 
for all sufficiently large $j$.
Since $h^{-1}|_{U_{a'}(p')}$ is uniformly continuous, there exists $\nu(\varepsilon)>0$ such that, 
for any $\boldsymbol{x}',\boldsymbol{y}'\in U_{a'}(p')$ with $d'(\boldsymbol{x}',\boldsymbol{y}')<\nu(\varepsilon)$, the inequality $d(\boldsymbol{x},\boldsymbol{y})<\eta(\delta(\varepsilon))$ 
holds, 
where $\boldsymbol{x}=h^{-1}(\boldsymbol{x}')$, $\boldsymbol{y}=h^{-1}(\boldsymbol{y}')$.
Since both $\widetilde H_{(j)}'^+$ and $\widetilde H_{(j)}'^{-}$ uniformly converge to the same half disk $H'^\natural$ 
in $D_{a'}(p')$, there exists $j_0\in \mathbb{N}$ such that, for any $j\geq j_0$ and any $\boldsymbol{x}' \in \widetilde H_{(j)}'^\sigma\setminus \mathcal{N}_\varepsilon(\widetilde\gamma_{(j)}',\widetilde H_{(j)}')$, $d'(\boldsymbol{x}',\boldsymbol{y}')$ is less than 
$\nu(\varepsilon)$, where 
$\boldsymbol{y}'$ is the element of $\widetilde H_{(j)}'^{-\sigma}$ with $\mathrm{pr}(\boldsymbol{x}')=\mathrm{pr}(\boldsymbol{y}')$.
Then we have $d(\boldsymbol{x},\boldsymbol{y})<\eta(\delta(\varepsilon))$.
If both $\boldsymbol{x}$ and $\boldsymbol{y}$ were contained in one of $\widetilde H_{(j)}^\sigma$ and $\widetilde H_{(j)}^{-\sigma}$, then 
by Lemma \ref{l_d_H}\,(i) $d_{\widetilde H_{(j)}}(\boldsymbol{x},\boldsymbol{y})<\delta(\varepsilon)$.
Then, by Lemma \ref{l_d_H}\,(ii), $d_{\widetilde H_{(j)}'}(\boldsymbol{x}',\boldsymbol{y}')$ 
would be less than $\varepsilon$.
This contradicts that  $\boldsymbol{x}' \in \widetilde H_{(j)}'^\sigma\setminus \mathcal{N}_\varepsilon(\widetilde\gamma_{m_j,n_j}',\widetilde H_{(j)}')$ and 
$\boldsymbol{y}'\in \widetilde H_{(j)}'^{-\sigma}$.
See Figure \ref{fig_HH2}.
\begin{figure}[hbt]
\centering
\scalebox{0.6}{\includegraphics[clip]{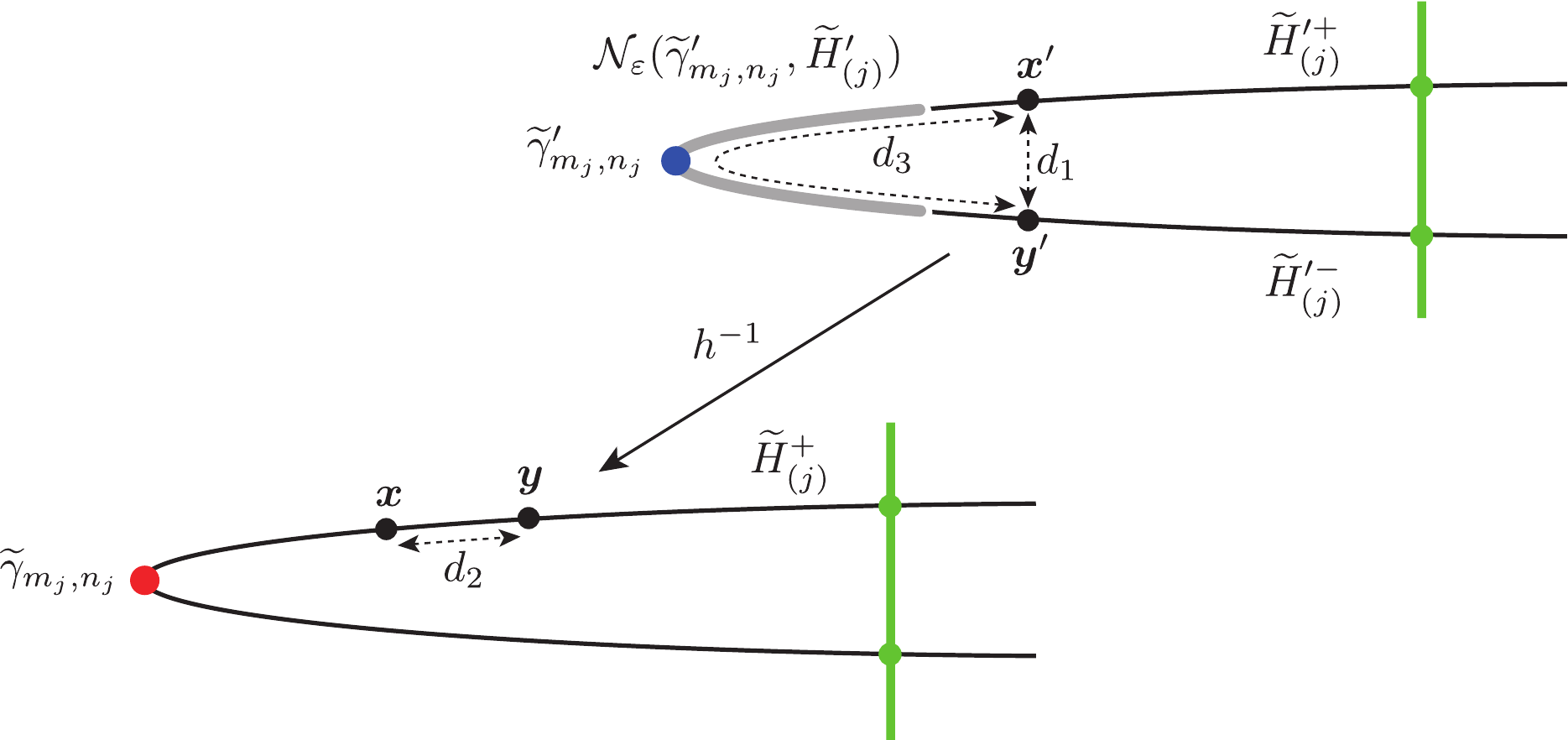}}
\caption{The situation which does not actually occur.
$d_1:=\mathrm{dist}(\boldsymbol{x}',\boldsymbol{y}')<\nu(\varepsilon)$, 
$d_2:=\mathrm{dist}_{\widetilde H_{(j)}}(\boldsymbol{x},\boldsymbol{y})<\delta(\varepsilon)$ and 
$d_3:=\mathrm{dist}_{\widetilde H_{(j)}'}(\boldsymbol{x}',\boldsymbol{y}')<\varepsilon$.}
\label{fig_HH2}
\end{figure}
Thus, if $\boldsymbol{y}$ is contained in $\widetilde H_{(j)}^\sigma$, then $\boldsymbol{x}$ is not in $\widetilde H_{(j)}^\sigma$.
In particular, $\boldsymbol{x}$ is not contained in $\widetilde\gamma_{m_j,n_j}=\widetilde H_{(j)}^+\cap \widetilde H_{(j)}^-$, and so $\widetilde\gamma_{m_j,n_j}\cap h^{-1}(\widetilde H_{(j)}'^\sigma\setminus \mathcal{N}_\varepsilon(\widetilde\gamma_{m,n}',\widetilde H_{(j)}'))=\emptyset$.
Since $h^{-1}(\widetilde H_{(j)}'^\sigma\setminus \mathcal{N}_\varepsilon(\widetilde\gamma_{m,n}',\widetilde H_{(j)}'))$ is 
connected, it follows that $h^{-1}(\widetilde H_{(j)}'^\sigma\setminus \mathcal{N}_\varepsilon(\widetilde\gamma_{m,n}',\widetilde H_{(j)}'))\subset \widetilde H_{(j)}^\sigma$ for $\sigma=\pm$, and hence 
 $h^{-1}(\mathcal{N}_\varepsilon(\widetilde\gamma_{m_j,n_j}',\widetilde H_{(j)}'))\supset
\widetilde\gamma_{m_j,n_j}\cap h^{-1}(\widetilde H_{(j)}')$.
This completes the proof.
\end{proof}

From the proof of Lemma \ref{l_HH}, we know that there exists a simple curve in $h(\widetilde\gamma_{m_j,n_j})\cap 
\widetilde H_{(j)}'$ 
connecting the two components of $\partial \widetilde H_{(j)}'\cap \partial \mathcal{N}_\varepsilon(\widetilde\gamma_{m_j,n_j}',\widetilde H_{(j)}')$.
The following corollary says that the images of certain straight segments in $D_a(p)$ by the homeomorphism $h$ 
are naturally straight segments in $D_{a'}(p')$.

\begin{cor}\label{c_HH}
For the limit straight segment $\gamma_0^\natural$ of $\gamma_{m_j,n_j}$, 
$h(\gamma_0^\natural)\cap D_{a'}(p')$ is the limit straight segment of $\gamma_{m_j,n_j}'$, i.e.\ $h(\gamma_0^\natural)\cap D_{a'}(p')=\gamma_0'^{\,\natural}$.
\end{cor}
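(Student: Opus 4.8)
The plan is to deduce the corollary from the key Lemma \ref{l_HH} by passing to the limit $j\to\infty$, establishing the two inclusions $h(\gamma_0^\natural)\cap D_{a'}(p')\subseteq \gamma_0'^{\,\natural}$ and $\gamma_0'^{\,\natural}\subseteq h(\gamma_0^\natural)\cap D_{a'}(p')$ separately. First I would record the limit objects on the target side. By the conjugacy relation $h\circ f=f'\circ h$, the disk $\widehat H_{(j)}'$ carrying $\widetilde H_{(j)}'$ is, up to taking components, a forward $f'$-iterate of $h(\widetilde H_{m_j})$; hence the same curvature-decay estimate as in Lemma \ref{l_gamma_star} (with $r$ replaced by $r'$ and the factor $(r')^{-n_j}$) shows that, after passing to a further subsequence, the folding curves $\gamma_{m_j,n_j}'$ converge uniformly to a straight segment $\gamma_0'^{\,\natural}$ in $D_{a'}(p')$, and that the disks $\widetilde H_{(j)}$, $\widetilde H_{(j)}'$ flatten onto disks in $D_a(p)\times\{0\}$, $D_{a'}(p')\times\{0\}$, so that the front curves $\widetilde\gamma_{(j)}$, $\widetilde\gamma_{(j)}'$ converge to $\gamma_0^\natural$, $\gamma_0'^{\,\natural}$ regarded as segments in the respective local unstable manifolds. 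This is the content implicitly invoked by the phrase ``the limit straight segment of $\gamma_{m_j,n_j}'$''.

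For the forward inclusion, take any point $\boldsymbol{x}^\natural\in\gamma_0^\natural$ whose image $h(\boldsymbol{x}^\natural)$ lies in the interior of $U_{a'}(p')$, and choose $\boldsymbol{x}_{(j)}\in\widetilde\gamma_{(j)}$ with $\boldsymbol{x}_{(j)}\to\boldsymbol{x}^\natural$, which is possible because $\widetilde\gamma_{(j)}\to\gamma_0^\natural$ uniformly. By continuity of $h$ we have $h(\boldsymbol{x}_{(j)})\to h(\boldsymbol{x}^\natural)$, so $h(\boldsymbol{x}_{(j)})\in\widetilde H_{(j)}'$ for all large $j$. Fix $\varepsilon>0$; Lemma \ref{l_HH} then gives $h(\boldsymbol{x}_{(j)})\in\mathcal{N}_\varepsilon(\widetilde\gamma_{(j)}',\widetilde H_{(j)}')$ for $j\ge j_0(\varepsilon)$, whence $\mathrm{dist}(h(\boldsymbol{x}_{(j)}),\widetilde\gamma_{(j)}')<\varepsilon$ since the ambient distance is dominated by the path distance $d_{\widetilde H_{(j)}'}$. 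Letting $j\to\infty$ and using $\widetilde\gamma_{(j)}'\to\gamma_0'^{\,\natural}$ yields $\mathrm{dist}(h(\boldsymbol{x}^\natural),\gamma_0'^{\,\natural})\le\varepsilon$, and since $\varepsilon$ is arbitrary and $\gamma_0'^{\,\natural}$ is closed, $h(\boldsymbol{x}^\natural)\in\gamma_0'^{\,\natural}$. As such points $\boldsymbol{x}^\natural$ are dense in the relevant part of $\gamma_0^\natural$, closedness gives $h(\gamma_0^\natural)\cap D_{a'}(p')\subseteq\gamma_0'^{\,\natural}$ after projecting onto the local unstable manifold of $p'$.

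For the reverse inclusion I would use the observation recorded just after Lemma \ref{l_HH}: for each large $j$ the set $h(\widetilde\gamma_{(j)})\cap\widetilde H_{(j)}'$ contains a simple curve joining the two components of $\partial\widetilde H_{(j)}'\cap\partial\mathcal{N}_\varepsilon(\widetilde\gamma_{(j)}',\widetilde H_{(j)}')$, i.e.\ $h(\widetilde\gamma_{(j)})$ crosses the whole strip. As $\varepsilon\to0$ this strip shrinks to $\widetilde\gamma_{(j)}'$, so the crossing curve sweeps out all of $\widetilde\gamma_{(j)}'$ in the limit; combined with the forward inclusion it forces the arc $h(\gamma_0^\natural)\cap D_{a'}(p')$ to be exactly the chord $\gamma_0'^{\,\natural}$ rather than a proper subsegment. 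Together the two inclusions give $h(\gamma_0^\natural)\cap D_{a'}(p')=\gamma_0'^{\,\natural}$.

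The step I expect to be the main obstacle is the interchange of the two limits. Lemma \ref{l_HH} controls $h(\widetilde\gamma_{(j)})$ only inside the small disk $\widetilde H_{(j)}'\subset U_{a'}(p')$, whereas the full image lives in the larger disk $\widehat H_{(j)}'\subset U_{b'}(p')$; one must therefore argue that the approximating points $h(\boldsymbol{x}_{(j)})$ genuinely remain in $\widetilde H_{(j)}'$ and do not slip into the part of $\widehat H_{(j)}'$ outside $U_{a'}(p')$, and that the path metrics $d_{\widetilde H_{(j)}'}$ behave well under the simultaneous flattening of the disks, so that Lemma \ref{l_d_H} and the uniform continuity of $h$, $h^{-1}$ apply with constants independent of $j$. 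Handling the double limit in the correct order, fixing $\varepsilon$, sending $j\to\infty$, and only then letting $\varepsilon\to0$, while keeping these uniformities, is the delicate part; once it is in place, the identification $h(\gamma_0^\natural)\cap D_{a'}(p')=\gamma_0'^{\,\natural}$ follows formally.
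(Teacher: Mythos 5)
Your proposal is correct and takes essentially the same route as the paper: the paper's own (very terse) proof likewise combines the uniform continuity of $h$ (so that $h(\gamma_0^\natural)\cap D_{a'}(p')$ is the limit of $h(\widetilde\gamma_{m_j,n_j})\cap \widetilde H_{(j)}'$) with Lemma \ref{l_HH} and the crossing observation recorded immediately after it, to conclude that this limit coincides with the limit of $\gamma_{m_j,n_j}'$. Your two-inclusion decomposition, the use of the conjugacy to get the primed-side limit segment, and the uniformity caveats you flag at the end are just a more explicit unpacking of that same argument.
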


\begin{proof}
Since $\gamma_0^\natural$ is the limit straight segment of $\widetilde \gamma_{m_j,n_j}$ and $h$ is uniformity 
continuous, 
$h(\gamma^\natural_0)\cap D_{a'}(p')$ is the limit of $h(\widetilde\gamma_{m_j,n_j})\cap \widetilde H_{(j)}'$.
It follows from Lemma \ref{l_HH} that $h(\gamma_0^\natural)\cap D_{a'}(p')$ is also the limit of $\mathrm{pr}(\widetilde\gamma_{m_j,n_j}')=\gamma_{m_j,n_j}'$, that is, $h(\gamma_0^\natural)\cap D_{a'}(p')$ is 
equal to the limit straight segment of $\gamma_{m_j,n_j}'$.
\end{proof}

For any straight segment $l$ in $D_a(p)$ such that $h(l)$ is also a straight segment in 
$D_{b'}(p')$, we denote $h(l)\cap D_{a'}(p')$ simply by $h(l)$. 
In particular, Corollary \ref{c_HH} implies that $h(\gamma_0^\natural)=\gamma_0'^{\,\natural}$.

\section{Proof of Theorem \ref{thm_A}}\label{S_pA}

Suppose that $\mathrm{St}_a(p)$ is the set of oriented proper straight segments in $D_a(p)$ passing 
through $0$, 
that is, each element of $\mathrm{St}_a(p)$ is an oriented diameter of the disk $D_a(p)$.
For any $l\in \mathrm{St}_a(p)$ and $n\in \mathbb{N}$, the component of $f^n(l)\cap U_a(p)$ containing $0$ is 
also an element of $\mathrm{St}_a(p)$.
We denote the element simply by $f^n(l)$.

Since $f^n|_{D_a(p)}$ preserves angles on $D_a(p)$, by \eqref{eqn_theta}, for any $k,n\in\mathbb{N}$, 
$$\vartheta(\gamma_{m,n})-\vartheta(\gamma_{m+k,n})=\vartheta(\gamma_{m,0})-\vartheta(\gamma_{m+k,0})
\to \vartheta(\gamma_0)-\vartheta(\gamma_0)=0$$
as $m\to \infty$.
Moreover it follows from \eqref{eqn_dmn} that $\lim_{j\to\infty}d_{m_j+k,n_j}=w_0\lambda^k$.
By these facts together with Lemma \ref{l_gamma_star}, one can show that 
$\gamma_{m_j+k,n_j}$ uniformly converges as $m\to \infty$ to a straight segment $\gamma_k^\natural$ in $U_a(p)$ 
with 
\begin{equation}\label{eqn_zeta_k}
\vartheta(\gamma_k^\natural)=\theta^\natural\quad\text{and}\quad 
d(0,\gamma_k^\natural)=w_0\lambda^k.
\end{equation}
Thus we have obtained the parallel family $\{\gamma_k^\natural\}$ of oriented straight segments in $D_a(p)$.
See Figure \ref{fig_parallel}.
\begin{figure}[hbt]
\centering
\scalebox{0.6}{\includegraphics[clip]{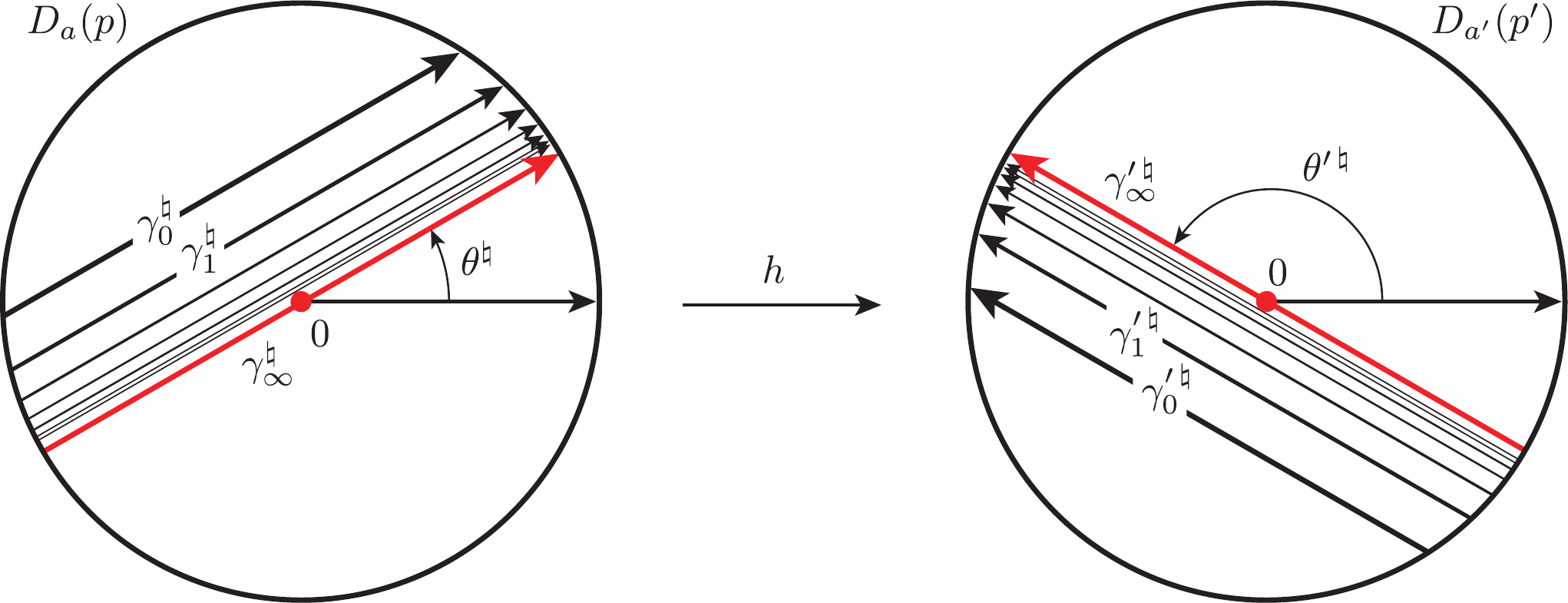}}
\caption{The images of the parallel straight segments $\gamma_k^\natural$ in $D_a(p)$ by $h$.}
\label{fig_parallel}
\end{figure}
By Corollary \ref{c_HH}, $\{\gamma_k'^{\,\natural}\}$ with $\gamma_k'^{\,\natural}=h(\gamma_k^\natural)$ 
is also a parallel family of oriented straight segments in $D_{a'}(p')$.
Since $\gamma_k'^\natural$ is the limit of $\gamma_{m_j+k,n_j}'$ as $j\to\infty$, we have the equations 
\begin{equation}\label{eqn_zeta_k'}
\vartheta(\gamma_k'^{\,\natural})=\theta'^{\,\natural}\quad\text{and}\quad 
d(0,\gamma_k'^{\,\natural})=w_0'\lambda'^k.
\end{equation}
corresponding to \eqref{eqn_zeta_k} for some $\theta'^{\,\natural}$ and $w_0'>0$.
Let $\gamma_\infty^\natural\in \mathrm{St}_a(p)$ 
(resp.\ $\gamma_\infty'^{\,\natural}\in \mathrm{St}_{a'}(p')$) be the limit of $\gamma_k^\natural$ 
(resp.\ $\gamma_k'^{\,\natural}$).

\begin{proof}[Proof of Theorem \ref{thm_A}]
By Lemma \ref{l_gamma_star} and \eqref{eqn_dmn}, $w_0=\lim_{j\to \infty}\widetilde d_0\widetilde t_0\lambda^{m_j}r^{n_j}$.
This implies that 
$$\lim_{j\to \infty}\left(\frac{m_j}{n_j}\log \lambda+\log r\right)=\lim_{j\to\infty}\frac1{n_j}\log 
\frac{w_0}{\widetilde d_0\widetilde t_0}=0$$
and hence $\lim_{j\to\infty}\dfrac{m_j}{n_j}=-\dfrac{\log r}{\log \lambda}$.
Applying the same argument to $\gamma_{m_j,n_j}'^{\,\natural}$, we also have 
$\lim_{j\to\infty}\dfrac{m_j}{n_j}=-\dfrac{\log r'}{\log \lambda'}$.
This shows the part \eqref{A1} of Theorem \ref{thm_A}.

Now we will prove the part \eqref{A2}.
For any $n\in \mathbb{N}\cup \{0\}$, we set $f^n(\gamma_\infty^\natural)=\gamma_{\infty,n}^\natural$ and 
$f'^n(\gamma_\infty'^{\,\natural})=\gamma_{\infty,n}'^{\,\natural}$.
By Corollary \ref{c_HH}, 
\begin{equation}\label{eqn_h_gamma}
h(\gamma_{\infty,n}^\natural)=h(f^n(\gamma_\infty^\natural))=f'^n(h(\gamma_\infty^\natural))=f'^n(\gamma_\infty'^{\,\natural})=
\gamma_{\infty,n}'^{\,\natural}.
\end{equation}
We identify $\mathrm{St}_a(p)$ with the unit circle $S^1=\{z\in\mathbb{C}\,;\,|z|=1\}$ by corresponding $l\in \mathrm{St}_a(p)$ to
$e^{\sqrt{-1}\vartheta(l)}$.
Then the action of $f$ on $\mathrm{St}_a(p)$ is equal to the $\theta$-rotation $R_\theta$ on $S^1$ defined by 
$R_\theta(z)=e^{\sqrt{-1}\theta}z$.

If $\theta/2\pi=v/u$ for coprime positive integers $u$, $v$ with $0\leq v<u$.
Since $h(\gamma_\infty^\natural)=\gamma_\infty'^{\,\natural}$, we have 
$f'^k(\gamma_\infty'^{\,\natural})\neq \gamma_\infty'^{\,\natural}$ for $k=1,\dots,u-1$ and 
$f'^u(\gamma_\infty'^{\,\natural})=\gamma_\infty'^{\,\natural}$.
This implies that $\theta'/2\pi=v'/u$ for some $v'\in\mathbb{N}$ with $0\leq v'<u$.
Since $h|_{D_a(p)}:D_a(p)\to D_{a'}(p')$ is a homeomorphism with the correspondence  
$h(R_\theta^k(\gamma_\infty^\natural))=R_{\theta'}^k(\gamma_\infty'^\natural)$ $(k=0,1,\dots,u-1)$, 
there exists an orientation-preserving homeomorphism $\eta_0:S^1\to S^1$ with 
$\eta_0(e^{\sqrt{-1}(\theta^\natural+k\theta)})=e^{\sqrt{-1}(\theta'^\natural+k\theta')}$ 
for $k=0,1,\dots,u-1$.
We set $\Gamma=\bigl\{e^{\sqrt{-1}(\theta^\natural+k\theta)};\,k=0,1,\dots,u-1\bigr\}$ and  $\Gamma'=\bigl\{e^{\sqrt{-1}(\theta'^\natural+k\theta')};\,k=0,1,\dots,u-1\bigr\}$. 
Then $\bigl[e^{\sqrt{-1}\theta^\natural},e^{\sqrt{-1}(\theta^\natural+\theta)}\bigr)\cap \Gamma$ consists of $v$ points, 
where $[a,b)$ denotes the positively oriented half-open interval in $S^1$ for $a,b\in S^1$ with $a\neq b$.
Since moreover $\eta_0\bigl(\bigl[e^{\sqrt{-1}\theta^\natural},e^{\sqrt{-1}(\theta^\natural+\theta)}\bigr)\cap \Gamma\bigr)
=\bigl[e^{\sqrt{-1}\theta'^\natural},e^{\sqrt{-1}(\theta'^\natural+\theta')}\bigr)\cap \Gamma'$ consists of $v'$ points, it follows that $v=v'$, and hence $\theta=\theta'$.

Next we suppose that $\theta/2\pi$ is irrational.
Then, for any $l\in \mathrm{St}_a(p)$, there exists a subsequence $\{n_k\}$ of $\mathbb{N}$ such that 
the sequence $\gamma_{\infty,n_k}^\natural$ uniformly converges to $l$ as $k\to \infty$.
By \eqref{eqn_h_gamma}, $\gamma_{\infty,n_k}'^{\,\natural}$ uniformly converges to $l'=h(l)$.
Since $\gamma_{\infty,n_k}'^{\,\natural}\in \mathrm{St}_{a'}(p')$, 
$l'$ is also an element of $\mathrm{St}_{a'}(p')$.
Thus we have a homeomorphism $\eta:S^1\to S^1$ with respect to which $R_\theta$ and $R_{\theta'}$ 
are conjugate.
Since the rotation number is invariant under topological conjugations, $\theta/2\pi=\theta'/2\pi\mod 1$ 
holds.
This completes the proof of the part \eqref{A2}.
\end{proof}

\section{Proof of Theorem \ref{thm_B}}\label{S_PB}

In this section, we will prove Theorem \ref{thm_B}.
Suppose that $f,f'$ are elements of $\mathrm{Diff}^r(M)$ satisfying the conditions of Theorems \ref{thm_A} 
and $\theta/2\pi$ is irrational.

Since $\theta=\theta'\mod 2\pi$, for any $k,j\in\mathbb{N}$, 
\begin{equation}\label{eqn_theta_gamma}
\vartheta(\gamma_{\infty,k}^\natural)-\vartheta(\gamma_{\infty,j}^\natural)=\vartheta(\gamma_{\infty,k}'^{\,\natural})-\vartheta(\gamma_{\infty,j}'^{\,\natural})=(k-j)\theta\mod 2\pi.
\end{equation}
Let $l_j$ $(j=1,2)$ be any elements of $\mathrm{St}_a(p)$.
As in the proof of Theorem \ref{thm_A}, there exist subsequences $\{n_k\}$, $\{n_j\}$ of $\mathbb{N}$ such that 
the sequencers $\{\gamma_{\infty,n_k}^\natural\}$, $\{\gamma_{\infty,n_j}^\natural\}$ uniformly 
converge to $l_1$ and $l_2$ respectively.
Then, $\{\gamma_{\infty,n_k}'^{\,\natural}\}$, $\{\gamma_{\infty,n_j}'^{\,\natural}\}$ also uniformly 
converge to the elements $l_1'=h(l_1)$ and $l_2'=h(l_2)$ of $\mathrm{St}_{a'}(p')$ respectively.
Then, by \eqref{eqn_theta_gamma},  
\begin{equation}\label{eqn_theta_l}
\vartheta(l_2)-\vartheta(l_1)=\vartheta(l_2')-\vartheta(l_1')\mod 2\pi.
\end{equation}

For the proof of Theorem \ref{thm_B}, we need another family of straight segments in $D_a(p)$.
Fix an integer $a_0$ with 
$$a_0>\max\left\{\frac{\log(2r)}{\log (\lambda^{-1})},\frac{\log(2r')}{\log (\lambda'^{-1})}\right\}.$$
For any $k\geq 0$, we consider the straight segment $\xi_k^\natural=f^k(\gamma_{a_0k}^\natural)\cap D_a(p)$.
By \eqref{eqn_zeta_k},  
\begin{equation}\label{eqn_theta_xi}
\vartheta(\xi_k^\natural)-\vartheta(\xi_0^\natural)=k\theta\mod 2\pi
\quad\text{and}\quad d(0,\xi_k^\natural)=w_0\lambda^{a_0k}r^k<2^{-k}w_0.
\end{equation}
Similarly, by \eqref{eqn_zeta_k'}, $\xi_k'^{\,\natural}=h(\xi_k^\natural)$ is a straight segment in $D_{a'}(p')$ with
\begin{equation}\label{eqn_theta_xi'}
\vartheta(\xi_k'^{\,\natural})-\vartheta(\xi_0'^{\,\natural})=k\theta\mod 2\pi
\quad\text{and}\quad d(0,\xi_k'^{\,\natural})=w_0'\lambda'^{a_0k}r'^k<2^{-k}w_0'.
\end{equation}

\begin{proof}[Proof of Theorem \ref{thm_B}]
Let $\alpha$ be the element of $\mathrm{St}_a(p)$ with $\vartheta(\xi_0^\natural)-\vartheta(\alpha)=\pi/2$ 
and $\alpha'=h(\alpha)\in \mathrm{St}_{a'}(p')$.
We will show that $\theta_{\alpha'}:=\vartheta(\xi_0'^{\,\natural})-\vartheta(\alpha')$ 
is also equal to $\pi/2\mod 2\pi$.
See Figure \ref{fig_beta}.
\begin{figure}[hbt]
\centering
\scalebox{0.6}{\includegraphics[clip]{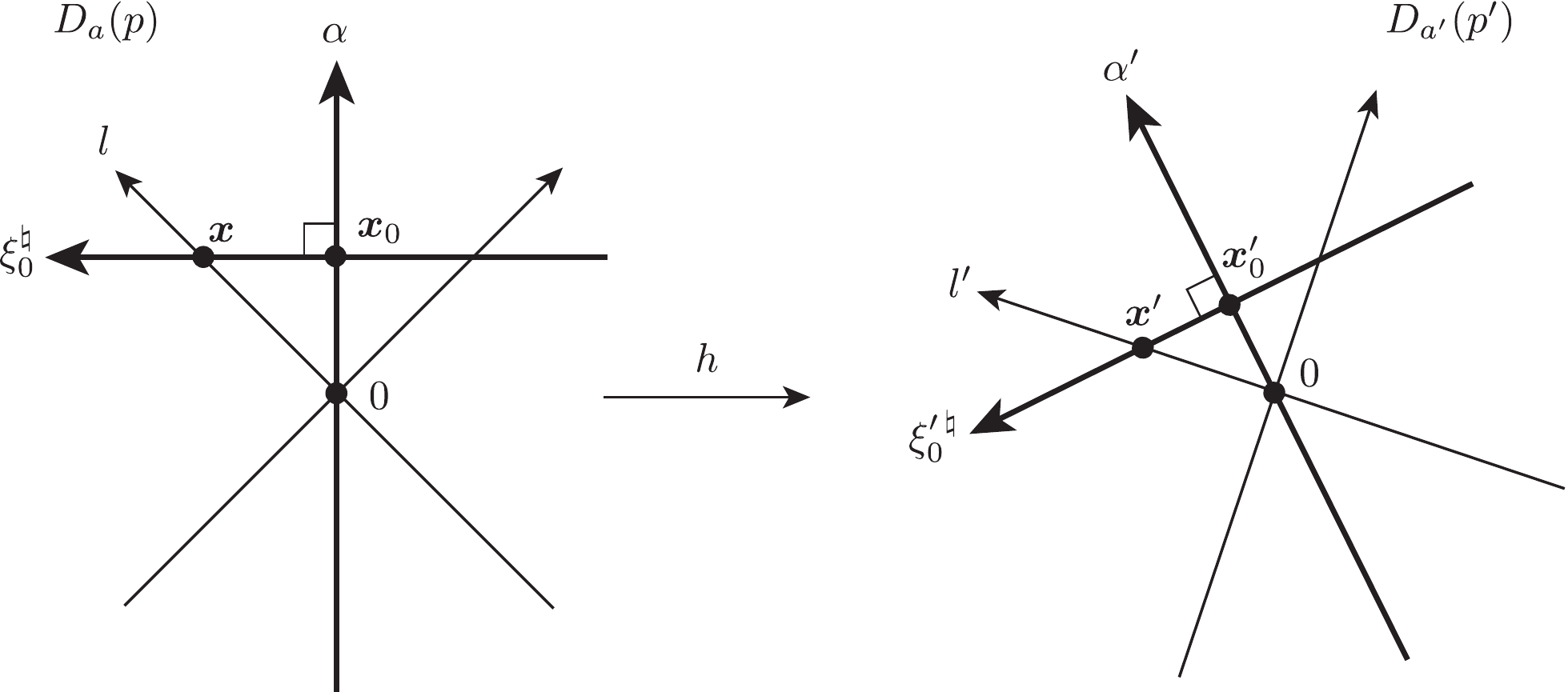}}
\caption{Correspondence of straight segments via $h$.}
\label{fig_beta}
\end{figure}
In fact, since $\theta/2\pi$ is irrational, by \eqref{eqn_theta_xi} there exists a subsequence $\xi_{k_j}^\natural$ uniformly converges to $\alpha$.
Since $h|_{D_a(p)}$ is uniformly continuous, $\xi_{k_j}'^{\,\natural}$ also uniformly converges to $\alpha'$.
On the other hand, since $\vartheta(\xi_{k_j}^\natural)-\vartheta(\alpha)=k_j\theta+\pi/2\mod 2\pi$ and 
$\vartheta(\xi_{k_j}'^{\,\natural})-\vartheta(\alpha')=k_j\theta+\theta_{\alpha'}\mod 2\pi$, 
$$\theta_{\alpha'}-\frac{\pi}2=\bigl(\vartheta(\xi_{k_j}'^{\,\natural})-\vartheta(\alpha')\bigr)
-\bigl(\vartheta(\xi_{k_j}^\natural)-\vartheta(\alpha)\bigr)\to 0\mod 2\pi$$
as $j\to\infty$.
Thus we have $\theta_{\alpha'}=\pi/2\mod 2\pi$.

We denote by $z(\boldsymbol{x})\in \mathbb{C}$ the entry of $\boldsymbol{x}\in D_{a}(p)$ with respect to the linearizing coordinate 
on $D_a(p)$.
Similarly, the entry of $\boldsymbol{x}'\in D_{a'}(p')$ is denoted by $z'(\boldsymbol{x}')$.
Let $\boldsymbol{x}_0$ be the intersection point of $\alpha$ and $\xi_0^\natural$, and let $\boldsymbol{x}_0'=h(\boldsymbol{x}_0)$.
One can set $z(\boldsymbol{x}_0)=\rho_0e^{\sqrt{-1}\omega_0}$ and $z'(\boldsymbol{x}_0')=\rho_0'e^{\sqrt{-1}\omega_0'}$ 
for some $\rho_0>0$, $\rho_0'>0$ and $\omega_0$, $\omega_0'\in \mathbb{R}$.
We define the new linearizing coordinate on $D_{a'}(p')$ by using the linear conformal map such that, for any $\boldsymbol{x}'\in D_{a'}(p')$, 
$z'^{\,\mathrm{new}}(\boldsymbol{x}')=\rho_0\rho_0'^{-1}e^{\sqrt{-1}(\omega_0-\omega_0')}z'(\boldsymbol{x}')$.
Then $z(\boldsymbol{x}_0)=z'^{\,\mathrm{new}}(\boldsymbol{x}_0')$ holds.

For any $\boldsymbol{x}\in \xi_0^\natural$, there exists $l\in \mathrm{St}_a(p)$ with $\{\boldsymbol{x}\}=\xi_0^\natural
\cap l$.
Then $\boldsymbol{x}'=h(\boldsymbol{x})$ is the intersection of $\xi_0'^{\,\natural}$ and $l'=h(l)$.
By \eqref{eqn_theta_l}, 
$\vartheta(l)-\vartheta(\alpha)=\vartheta(l')-\vartheta(\alpha')\mod 2\pi$ 
and hence $z(\boldsymbol{x})=z'^{\,\mathrm{new}}(\boldsymbol{x}')$.
We say the property that $h$ is \emph{identical} on $\xi_0^\natural$.
Since $\theta/2\pi$ is irrational, there exists $k_*\in\mathbb{N}$ satisfying
$$\frac{\pi}3\leq \vartheta(\xi_{k_*}^\natural)-\vartheta(\xi_0^\natural)\leq \frac{\pi}2\mod 2\pi.$$
Then $\xi_{k_*}^\natural$ meets $\xi_0^\natural$ at a single point $\boldsymbol{x}_{k_*}$ 
in $D_a(p)$.
For $\alpha_{k_*}=f^{k_*}(\alpha)$ and $\alpha_{k_*}'=h(\alpha_{k_*})$, 
we have 
$\vartheta(\xi_{k_*}^\natural)-\vartheta(\alpha_{k_*})=\vartheta(\xi_{k_*}'^{\,\natural})-\vartheta(\alpha_{k_*}')=\pi/2$.
Since $h$ is identical at $\boldsymbol{x}_{k_*}$, 
$h$ is proved to be identical on $\xi_{k_*}^\natural$ by an argument as above.
Then one can show inductively that, for any $n\in\mathbb{N}$, $h$ is identical on $\xi_{nk_*}^\natural$.
See Figure \ref{fig_beta_2}.
\begin{figure}[hbt]
\centering
\scalebox{0.6}{\includegraphics[clip]{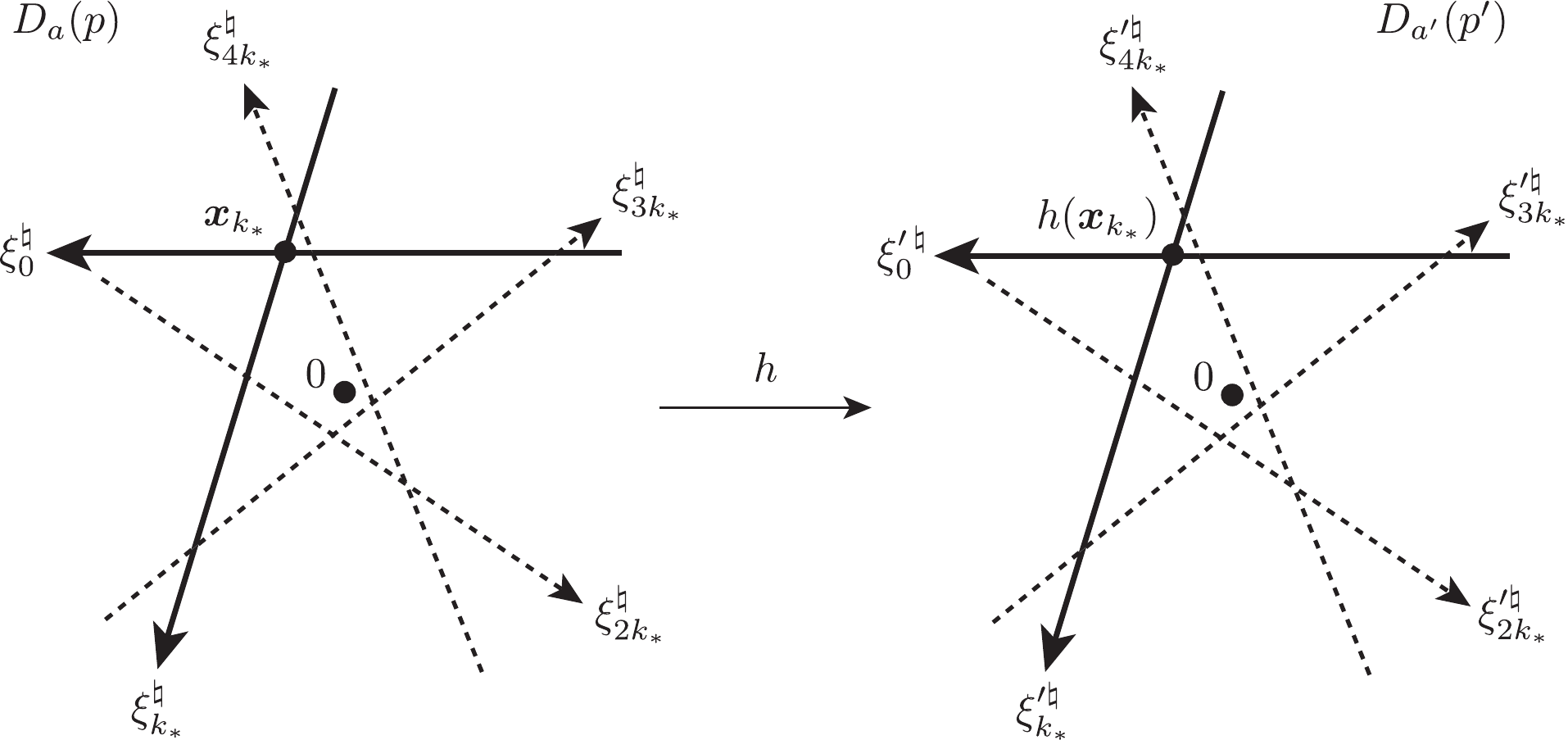}}
\caption{Correspondence via $h$ with respect to the new coordinate on $D_{a'}(p')$.}
\label{fig_beta_2}
\end{figure}
By \eqref{eqn_theta_xi}, $\lim_{n\to\infty}d(0,\xi_{nk_*}^\natural)=0$.
Since moreover $k_*\theta/2\pi$ is irrational, $\overline{\bigcup_{n=1}^\infty \xi_{nk_*}^\natural}$ is equal to $D_a(p)$.
This shows that $h$ is identical on $D_a(p)$.
In particular, this implies that $h|_{D_a(p)}$ is a linear conformal map 
with respect to the original coordinates.
We write $z(q)=\rho_1e^{\sqrt{-1}\omega_1}$ and $z'(q')=\rho_1'e^{\sqrt{-1}\omega_1'}$.
It follows from the assumption of $h(q)=q'$ in our theorems that 
$h(z)=\rho_1'\rho_1^{-1}e^{\sqrt{-1}(\omega_1'-\omega_1)}z$ for any $z\in\mathbb{C}$ with $|z|\leq a$.
In particular, this implies that $h|_{W_{\mathrm{loc}}^u(p)}$ is a linear conformal map.
Let $\widetilde h$ be any other conjugacy homeomorphism between $f$ and $f'$ satisfying the conditions in Theorems \ref{thm_A} and \ref{thm_B}.
In particular, $\widetilde h(p)=p'$ and $\widetilde h(q)=q'$ hold.
Since $z(q)=\rho_1e^{\sqrt{-1}\omega_1}$ and $z'(q')=\rho_1'e^{\sqrt{-1}\omega_1'}$, 
one can show as above that $\widetilde h(z)=\rho_1'\rho_1^{-1}e^{\sqrt{-1}(\omega_1'-\omega_1)}z$ for any $z\in\mathbb{C}$ with $|z|\leq a$ and hence $\widetilde h|_{D_a(p)}=h|_{D_a(p)}$.
This shows the assertion \eqref{B2} of Theorem \ref{thm_B} and $r=r'$. 
Then, by the assertion \eqref{A1} of Theorem \ref{thm_A}, we also have $\lambda=\lambda'$.
This completes the proof.
\end{proof}

Let $\widehat z$ be the homoclinic transverse point of $W^u(p)$ and $W^s(p)$ given in Subsection \ref{ss_bent_disk}.
Fix a sufficiently large $n\in \mathbb{N}$ with $s=f^{-n}(\widehat z)\in D_p(a)$.
Then $s'=h(s)$ is contained in $D_{b'}(p')$.
The following corollary shows that $z(s)/z(q)$ is a modulus for $f$.
Recall that $z(\boldsymbol{x})\in\mathbb{C}$ is the entry of $\boldsymbol{x}$ with respect to 
the complex linearizing coordinate on $D_a(a)$.
The complex number $z'(\boldsymbol{x}')$ is defined similarly for $\boldsymbol{x}'\in D_{a'}(p')$.

\begin{mcorollary}\label{c_C}
Let $f$, $f'$ be elements of $\mathrm{Diff}^r(M)$ satisfying the conditions of Theorems \ref{thm_A} and \ref{thm_B}, 
and let $h$ be a conjugacy homeomorphism between $f$ and $f'$ with $h(p)=p'$ and $h(q)=q'$.
If $h|_{W_{\mathrm{loc}}^u(p)}$ is orientation-preserving, then $z(s)/z(q)=z'(s')/z'(q')$.
Otherwise, $z(s)/z(q)=\overline{z'(s')/z'(q')}$.
\end{mcorollary}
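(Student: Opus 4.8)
The plan is to leverage Theorem~\ref{thm_B}\,\eqref{B2}, which tells us that $h|_{D_a(p)}$ is a linear conformal map. By the final computation in the proof of Theorem~\ref{thm_B}, this map has the explicit form $h(z)=\rho_1'\rho_1^{-1}e^{\sqrt{-1}(\omega_1'-\omega_1)}z$ in the orientation-preserving case, where $z(q)=\rho_1e^{\sqrt{-1}\omega_1}$ and $z'(q')=\rho_1'e^{\sqrt{-1}\omega_1'}$. The key observation is that this coefficient is precisely $z'(q')/z(q)$ expressed as a complex number, so $h$ acts on the entry of any point in $D_a(p)$ by multiplication by $z'(q')/z(q)$.

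First I would reduce the statement about $s=f^{-n}(\widehat z)$ to a statement about a point lying in $D_a(p)$ on which $h$ acts conformally. The difficulty is that $s'=h(s)$ is only guaranteed to lie in $D_{b'}(p')$, not necessarily in $D_{a'}(p')$, so $h$ near $s$ is not literally the linear map. The resolution is the equivariance $h\circ f = f'\circ h$: since $z(s)$ is an entry in the linearizing coordinate on $D_a(p)$ where $f$ acts as $z\mapsto re^{\sqrt{-1}\theta}z$, and similarly $f'$ acts linearly near $p'$, the ratio $z(s)/z(q)$ is invariant under applying $f$ to both $s$ and $q$ (the common factor $re^{\sqrt{-1}\theta}$ cancels). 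Thus I may replace $s$ by a forward iterate $f^k(s)$ lying well inside $D_a(p)$ without changing the ratio, and correspondingly $h(f^k(s))=f'^k(s')$ lies inside $D_{a'}(p')$ for $k$ large.

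The main computation is then direct. In the orientation-preserving case, $h(z)=\bigl(z'(q')/z(q)\bigr)\,z$ for all $z$ with $|z|\le a$, so applying $h$ to the entry of $f^k(s)$ gives
\begin{equation*}
z'\bigl(f'^k(s')\bigr)=\frac{z'(q')}{z(q)}\,z\bigl(f^k(s)\bigr).
\end{equation*}
Dividing this equation by the analogous identity $z'(f'^k(q'))=\bigl(z'(q')/z(q)\bigr)z(f^k(q))$ for $q$, and using that $f$, $f'$ act by the same conformal multiplier $re^{\sqrt{-1}\theta}=r'e^{\sqrt{-1}\theta'}$ on both columns so that the iteration factors cancel in the quotient, yields $z'(s')/z'(q')=z(s)/z(q)$, which is the desired identity. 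In the orientation-reversing case, the conjugacy restricted to the local unstable manifold is an anti-conformal linear map of the form $z\mapsto c\,\overline{z}$ for a suitable constant $c$, so the same quotient argument produces the complex conjugate $\overline{z'(s')/z'(q')}$ instead.

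The step I expect to require the most care is the reduction in the middle paragraph: one must verify that $\widehat z$ (hence $s$) genuinely lies on $W^u(p)$ so that its linearized entry transforms by the multiplier $re^{\sqrt{-1}\theta}$ under $f$, and that the forward-iterate replacement is compatible with $h$ via the conjugacy relation so that the iterated point's image stays inside $D_{a'}(p')$ where the linearity of $h$ is available. Once this placement is justified, the remaining algebra is the routine cancellation described above.
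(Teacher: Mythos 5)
Your core computation is exactly the paper's proof: by Theorem \ref{thm_B}\,\eqref{B2} (in the explicit form $h(z)=\rho_1'\rho_1^{-1}e^{\sqrt{-1}(\omega_1'-\omega_1)}z$ for $|z|\leq a$ obtained at the end of its proof), $h|_{D_a(p)}$ is multiplication by the constant $z'(q')/z(q)$, and dividing the resulting identities for $s$ and for $q$ gives the claim; the paper phrases this same fact as similarity of the triangle with vertices $0,z(q),z(s)$ to the one with vertices $0,z'(q'),z'(s')$, and likewise treats the orientation-reversing case as the anti-conformal analogue.

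However, your middle ``reduction'' paragraph is both unnecessary and, as written, incorrect. It is unnecessary because the formula from Theorem \ref{thm_B} constrains $h$ on its \emph{domain} $D_a(p)$, which contains $s$ by the choice of $n$; on the target side one only needs the linearizing coordinate $z'$ to be defined at $s'$, and it is, since $h(D_a(p))\subset D_{b'}(p')$ (this follows from $h(U_a(p))\subset U_{b'}(p')$, invariance of unstable manifolds under conjugacy, and connectedness), which is precisely why the paper records that $s'\in D_{b'}(p')$ and works with $z'$ there. It is incorrect because $r>1$: the map $f$ is \emph{expanding} on $D_a(p)$, so a forward iterate $f^k(s)$ moves away from $p$ and eventually leaves $D_a(p)$, and likewise $f'^k(s')$ leaves $D_{a'}(p')$; your claim that these lie ``well inside'' the disks for large $k$ has the dynamics backwards. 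To push points toward the fixed point within the unstable manifold you must use backward iterates $f^{-k}$, under which $f'^{-k}(s')\to p'$ does enter $D_{a'}(p')$. With that sign fix (or, better, with the paragraph simply deleted) your cancellation argument goes through; note also that the equality $re^{\sqrt{-1}\theta}=r'e^{\sqrt{-1}\theta'}$ is not needed for it, since the multiplier of $f'$ cancels within the ratio $z'(s')/z'(q')$ on its own.
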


\begin{proof}
Here we only consider the case that $h$ is orientation-preserving.
Since $h|_{D_a(p)}$ is a linear conformal map, 
the triangle with vertices $0,z(q),z(s)$ is similar to that with vertices $0,z'(q'),z'(s')$ 
with respect to Euclidean geometry.
This shows $z(s)/z(q)=z'(s')/z'(q')$.
\end{proof}

\subsection*{Acknowledgement}
The authors would like to thank the referee and the editors for helpful comments and suggestions.


\end{document}